\makeatletter \@addtoreset{equation}{section}
\newcommand{\mb}{\mathbb}
\newcommand{\mc}{\mathcal}
\newcommand{\eul}{\mathfrak}
\newcommand{\A}{\eul A}
\newcommand{\Ao}{{\eul A}_{\scriptscriptstyle 0}}
\newcommand{\vp}{\varphi}
\newcommand{\M}{{\mathfrak M}}
\newcommand{\T}{{\mc T}}
\newcommand{\Hil}{{\mc H}}
\newcommand{\C}{\mathrm{C}^{\ast}}
\newcommand{\mult}{\,{\scriptstyle \square}\,}
\newcommand{\D}{{\mc D}}
\def\x{\relax\ifmmode {\mbox{*}}\else*\fi}
\newcommand{\id}{e}
\newcommand{\ip}[2]{\langle{#1}|{#2}\rangle}
\newtheorem{defn}{Definition}[section]
\newtheorem{prop}[defn]{Proposition}
\newtheorem{thm}[defn]{Theorem}
\newtheorem{lemma}[defn]{Lemma}
\newtheorem{cor}[defn]{Corollary}
\theoremstyle{remark}
\newtheorem{rem}[defn]{Remark}
\newtheorem{example}[defn]{Example}
\newcommand{\bedefi}{\begin{defn}$\!\!${\bf }$\;$\rm }
\newcommand{\findefi}{ \end{defn}}
\newcommand{\ad}{^{\mbox{\scriptsize $\dag$}}}
\newcommand{\LDH}{{\mathcal L}\ad(\D,\Hil)}
\newcommand{\LDHpi}{{\mathcal L}\ad(\D_{\scriptscriptstyle\pi},\Hil_{\scriptscriptstyle\pi})}
\newcommand{\LDHpicl}{{\mathcal L}\ad(\widetilde{\D}_{\scriptscriptstyle\pi},\Hil_{\scriptscriptstyle\pi})}
\newcommand{\LDHr}{{\mathcal L}\ad(\lambda_\omega(\Ao),\H_\omega)}
\newcommand{\LpD}{{\mathcal L}\ad(\D)}
\newcommand{\LpDr}{{\mathcal L}\ad(\D_{\scriptscriptstyle\pi})}
\newcommand{\LAoH}{{\mathcal L}\ad(\Ao,\Hil)}
\newcommand{\QA}{{\mathcal Q}_{\Ao}(\A)}
\newcommand{\SSA}{{\mathcal S}_{\Ao}(\A)}
\newcommand{\betheo}{\begin{thm}}
\newcommand{\entheo}{\end{thm}}
\newcommand{\becor}{\begin{coroll}}
\newcommand{\encor}{\end{coroll}}
\newcommand{\belem}{\begin{lemma}}
\newcommand{\enlem}{\end{lemma}}
\newcommand{\beprop}{\begin{prop}}
\newcommand{\enprop}{\end{prop}}
\newcommand{\berem}{\begin{rem}$\!\!${\bf }$\;$\rm }
\newcommand{\beex}{\begin{example}$\!\!${\bf }$\;$\rm }
\newcommand{\enex}{ \end{example}}
\newcommand{\enrem}{ \end{rem}}
\newcommand{\rep}{{\mc R}(\A,\Ao)}
\newcommand{\crep}{{\mc R}_c(\A,\Ao)}
\newcommand{\curep}{{\mc R}_c^u(\A,\Ao)}
\newcommand{\hcrep}{{\mc R}_c(\H,\Ao)}
\newcommand{\hcurep}{{\mc R}_c^u(\H,\Ao)}
\def\H{{\mathcal H}}
\newcommand{\wmult}{\mbox{\raisebox{1pt}{$\scriptscriptstyle{
\square}$}}}
\newcommand{\omom}{\Omega^\omega}
\begin{document}
\title[Representable and continuous functionals]{Representable and continuous functionals on Banach quasi *-algebras}
\author{Maria Stella Adamo}
\author{Camillo Trapani}

\address{\textsc{Maria Stella Adamo}, Dipartimento di Matematica e Informatica, Universit\`a di Palermo, I-90123 Palermo, Italy}  \email{mariastella.adamo@community.unipa.it; msadamo@unict.it}
\address{\textsc{Camillo Trapani}, Dipartimento di Matematica e Informatica, Universit\`a di Palermo, I-90123 Palermo, Italy} \email{camillo.trapani@unipa.it}
\subjclass[2010]{Primary 46L08; Secondary 46L89, 47L60 }
\maketitle

\textsc{\abstractname{}}: In the study of locally convex quasi *-algebras an important role is played by representable linear functionals; i.e., functionals which allow a GNS-construction. This paper is mainly devoted to the study of the continuity of representable functionals in  Banach and Hilbert quasi *-algebras. Some other concepts related to representable functionals (full-representability, *-semisimplicity, etc) are revisited in these special cases. In particular, in the case of Hilbert quasi *-algebras, which are shown to be fully representable, the existence of a 1-1 correspondence between positive, bounded elements (defined in an appropriate way) and continuous representable functionals is proved.

\section{Introduction}\label{Sec1}
The main subject of this paper is the study of a class of linear functionals on a normed quasi *-algebra $(\A, \Ao)$ called {\em representable} functionals \cite{Trap1, bit_repr}. For short, they are exactly the linear functionals for which a GNS-like construction is possible.
As well as for the case of Banach *-algebras,
a certain amount of information on the structure of a Banach quasi
*-algebra can be obtained from the knowledge of the properties of
the family of its *-representations.
It is peculiar of this framework the need of enlarging the family of representations under consideration: in fact, in the theory of Banach *-algebras, representations take usually values in the C*-algebra $\mathcal{B}(\Hil)$  of bounded operators in some  Hilbert space $\H$; in the case of a locally convex quasi *-algebra   $(\A,\Ao)$, also unbounded representations are called on the stage.
A
*-representation of $(\A, \Ao)$ is, in fact, a *-homomorphism $\pi$ of $(\A,
\Ao)$ into the {\it partial} *-algebra $\LDH$ of closable linear
operators defined on a dense domain $\D$ of Hilbert space $\Hil$.  A
*-representation of a normed or Banach quasi *-algebra may be {\it
unbounded} in the sense that for some $a \in \A$, $\pi(a)$ can be
a true unbounded operator in $\H$ \cite{Ant2}.

From an historical point of view, the main motivation for theses studies comes from applications. In fact, quasi *algebras were introduced in the early 1980's by G. Lassner \cite{Lass2,Lass3}, for dealing with certain quantum physical models which do not fit into the celebrated algebraic approach of Haag and Kastler \cite{Haag} to quantum statistical physics and quantum field theory; see also \cite{Ant2, Bag6,Trap3}.

However since then many aspects of the theory have been investigated from a pure mathematical point of view (see, for instance, \cite{Ant2, Bag1, Bag5,bell_ct, Frag1,Frag2, Triolo, Trap1}) and several new notions have been introduced (e.g., full representability, full closability and *-semisimplicity).

In this paper, we examine some of these concepts and the interplay between them in the case of a Banach quasi *-algebra. We devote a particular attention to Hilbert quasi *-algebras which arise as completions of Hilbert algebras under the norm defined by their own inner product.

Going into details, in Section \ref{Sec2} we recall some definitions, properties and preliminary results needed for the work we are going to develop.  Section \ref{Sec3} is mainly devoted to examine the question as to whether a representable functional on a Banach quasi *-algebra $(\A,\Ao)$ is continuous. In particular, we prove that every continuous and representable functional on a Banach quasi *-algebra is {\em uniformly representable} which, roughly speaking, means that they satisfy a sort of Hilbert boundedness. Then, we study the sesquilinear forms associated to a representable and continuous functional  and we prove that they are necessarily everywhere defined and bounded. Thus in this case, the condition of full representability \cite{Frag2}  depends only on the {\em sufficiency} of this set of functionals, in the sense that this family of functionals separates points of $(\A,\Ao)$. In \cite{Bag2} it was shown that for a *-semisimple Banach quasi  *-algebra  this set of functionals  is sufficient. Thus a *-semisimple Banach quasi *-algebra is fully representable and the converse is true under an additional \textit{condition of positivity} (called $(P)$ in the text), which is satisfied in many interesting examples.

In Section \ref{Sec4} we focus on Hilbert quasi $^{\ast}-$algebras (which are proved to be \textit{always} fully representable) and, in particular, we characterize representable and continuous functionals on them. These functionals are in 1-1 correspondence with  {\em bounded} and {\em positive} elements of the Hilbert quasi *-algebra (Proposition \ref{defn_bounded} and Definition \ref{pos}).
We show that our definition is a generalization of the notion given in \cite{Frag2} and we study conditions under which these two notions coincide and, again, the condition of positivity (P) plays a role for their equivalence.  Moreover, if the Hilbert quasi *-algebra has a unit, the set of all bounded elements is a C*-algebra with respect to a {\em weak} multiplication on the Hilbert quasi *-algebra. This weak multiplication makes the Hilbert quasi *-algebra under consideration into a partial O*-algebra \cite{Ant2}.

Finally, in Section \ref{Sec5} we discuss the problem of continuity of representable functionals in the case where $\A$ is the Hilbert space of square integrable functions.

\section{Preliminaries and basic results}\label{Sec2}

\bedefi A {\em partial $^{\ast}-$algebra} is a $\mathbb{C}-$vector space $\A$ equipped of an involution $x\mapsto x^{\ast}$,  $x\in\A$, and a subset $\Gamma\subset\A\times\A$ with the following properties:
\begin{itemize}
\item[(i)] $(x,y)\in\Gamma$ if, and only if, $(y^{\ast},x^{\ast})\in\Gamma$;
\item[(ii)] if $(x,y)\in\Gamma$ and $(x,z)\in\Gamma$, then $(x,\lambda y+\mu z)\in\Gamma$ for every $\lambda,\mu\in\mathbb{C}$;
\item[(iii)] for every $(x,y)\in\Gamma$ there exists an element $x\cdot y\in\A$ with following the properties:
\begin{enumerate}
\item $x\cdot(y+\lambda z)=x\cdot y+\lambda(x\cdot z)$ and
\item $(x\cdot y)^{\ast}=y^{\ast}\cdot x^{\ast}$
\end{enumerate}
for every $(x,y),(x,z)\in\Gamma$ and $\lambda\in\mathbb{C}$.
\end{itemize}
\findefi

\beex\label{LDH}
Let $\Hil$ be a Hilbert space with inner product $\ip{\cdot}{\cdot}$ and let $\D$ be a dense linear subspace of $\Hil$. We denote by $\LDH$ the set of all closable operators $X$ in $\Hil$ such that the domain of $X$ is $\D$ and the domain of the adjoint $X^{\ast}$ contains $\D$, i.e.
$$\LDH=\left\{X:\D\to\Hil:\mathcal{D}(X)=\D, \mathcal{D}(X^{\ast})\supset\D\right\}.$$
$\LDH$ is a $\mathbb{C}-$vector space with the usual sum $X+Y$ and scalar multiplication $\lambda X$ for every $X,Y\in\LDH$ and $\lambda\in\mathbb{C}$. If we define the following involution $\dagger$ and partial multiplication $\wmult$
$$X\mapsto X^{\dagger}\equiv X^{\ast}\upharpoonright_{\D}\quad\text{and}\quad X\wmult Y=X^{\dagger\ast}Y$$
defined whenever $X$ is a left multiplier of $Y$ (or $Y$ a right multiplier of $X$), i.e. $Y\D\subset\mathcal{D}(X^{\dagger\ast})$ and $X^{\dagger}\D\subset\mathcal{D}(Y^{\ast})$, then $\LDH$ is a partial *-algebra.
In $\LDH$ several topologies can be introduced (see, \cite{Ant2}). Here, we will use only the {\em weak- topology} ${\sf t_w}$ defined by the family of seminorms
$$ p_{\xi,\eta} (X)=\ip{X\xi}{\eta},\quad \xi, \eta \in \D.$$

 A {\em partial O*-algebra} is a $^{\dagger}-$invariant subspace $\mathfrak{M}$ of $\LDH$ such that $X\wmult Y\in\mathfrak{M}$ whenever $X,Y\in\mathfrak{M}$ and $X$ is a left multiplier of $Y$.

{We denote by $\LpD$ the subset of elements $X$ of $\LDH$ such that $X\D\subseteq \D$ and $X^\dagger D\subset \D$. Then $\LpD$ is a *-algebra with respect to the the weak multiplication  $\wmult$ and one has, in particular
 $(X\wmult Y)\xi=XY\xi$, for every $\xi \in \D$.

 If $\M$ is a $^\dagger$-invariant family of operators of $\LDH$ the {\em weak commutant} $(\M, \D)'_w$ of $\M$ is defined as follows
 $$(\M, \D)'_w=\{ B\in {\mc B}(\H): \ip{BX\xi}{\eta}= \ip{B\xi}{X^\dag \eta}, \, \forall X \in \M, \xi, \eta \in \D\}.$$
 $(\M, \D)'_w$ is stable under involution and it is weakly closed, but it is not an algebra, in general.}
\enex

\bedefi A {\em quasi *--algebra} $(\A, \Ao)$ is a pair consisting of a vector space $\A$ and a *--algebra $\Ao$ contained in $\A$ as a subspace and such that
\begin{itemize}
\item[(i)] $\A$ carries an involution $a\mapsto a^*$ extending the involution of $\Ao$;
\item[(ii)] $\A$ is  a bimodule over $\A_0$ and the module multiplications extend the multiplication of $\Ao$. In particular, the following associative laws hold:
\begin{equation}\notag \label{eq_associativity}
(xa)y = x(ay); \ \ a(xy)= (ax)y, \; \forall \, a \in \A, \,  x,y \in \Ao;
\end{equation}
\item[(iii)] $(ax)^*=x^*a^*$, for every $a \in \A$ and $x \in \Ao$.
\end{itemize}
\findefi

\berem Every quasi *-algebra $(\A,\Ao)$ is a partial *-algebra: $(x,y)\in\Gamma$ if, and only if, $x\in\Ao$ or $y\in\Ao$.\enrem

A quasi *-algebra $(\A, \Ao)$ is
\emph{unital} if there is an element $\id \in \Ao$, such that
$a\id=a=\id a$, for all $a \in \A$; $\id$ is unique and called the
\emph{unit} of $(\A, \Ao)$.

\bedefi A {\em *-representation} of a quasi *-algebra $(\A,\Ao)$ is a *-homomorphism $\pi:\A\to\LDHpi$, where $\D_{\scriptscriptstyle\pi}$ is a dense subspace of the Hilbert space $\Hil_{\scriptscriptstyle\pi}$, with the following properties:
\begin{itemize}
\item[(i)] $\pi(a^{\ast})=\pi(a)^{\dagger}$ for all $a\in\A$;
\item[(ii)] if $a\in\A$ and $x\in\Ao$, then $\pi(a)$ is a left multiplier of $\pi(x)$ and $\pi(a)\wmult\pi(x)=\pi(ax)$.
\end{itemize}
{A *-representation $\pi$ of $(\A,\Ao)$ is called a {\em qu*-representation} if $\pi(\Ao)\subset \LpDr$.}

A *-representation $\pi$ is {\em faithful} if $a\neq0$ implies $\pi(a)\neq0$. $\pi$ is {\em cyclic} if $\pi(\Ao)\xi$ is dense in $\Hil_{\scriptscriptstyle\pi}$ for some $\xi\in\D_{\scriptscriptstyle\pi}$.
If $(\A,\Ao)$ has a unit $e$, we suppose that $\pi(e)=I_\D$, the identity operator of $\D$.
\findefi

 The {\em closure $\widetilde{\pi}$} of a *-representation $\pi$ of the quasi *-algebra $(\A,\Ao)$ in $\LDHpi$ is defined as
$$\widetilde{\pi}:\A\to\LDHpicl,\quad a\mapsto\overline{\pi(a)}_{\upharpoonright\widetilde{\D}_{\scriptscriptstyle\pi}}$$
where $\widetilde{\D}_{\scriptscriptstyle\pi}$ is the completion of $\D_{\scriptscriptstyle\pi}$ with respect to the graph topology, i.e. the topology defined by the seminorms $\eta\in\D_{\scriptscriptstyle\pi}\mapsto\left\|\pi(a)\eta\right\|$ for every $a\in\A$.
{A *-representation $\pi$ is said to be {\em closed} if $\pi=\widetilde{\pi}$.}

\bedefi
Let $(\A, \Ao)$ be a quasi *-algebra.  A linear functional $\omega$ on a $\A$ is said to be {\em representable} if it satisfies the following
conditions:
\begin{itemize}
    \item[(L.1)]$\omega(x^*x) \geq 0, \quad \forall x \in \Ao$;
    \item[(L.2)]$\omega(y^*a^* x)= \overline{\omega(x^*ay)}, \quad
\forall x,y \in \Ao, \forall a \in \A$;
    \item[(L.3)]$\forall a \in \A$, there exists $\gamma_a >0$ such
    that $$|\omega(a^*x)| \leq \gamma_a \omega(x^*x)^{1/2}, \quad \forall x
    \in \Ao.$$\end{itemize}
 The family of representable functionals is denoted by $\rep$.
\findefi
\berem  The following statements are easily proved. \begin{itemize} \item[(a)]If $\omega_1,\omega_2\in \rep$, then $\omega_1 + \omega_2 \in \rep$ as well as $\lambda\omega_1\in \rep$, for every $\lambda\geq0$.

\item[(b)] If $\omega \in \rep$ and $z \in \Ao$, then the linear functional $\omega_z$, defined by
$$ \omega_z (a):=\omega(z^* a z), \quad a \in \A,$$
is representable.

{\item[(c)] If $(\A, \Ao)$ has a unit $e$ and $\omega \in \rep$ is such that $\omega(x)=0$, for every $x\in \Ao$, then $\omega\equiv 0$.}
\end{itemize}
\enrem

\medskip
\betheo\cite[Theorem 3.5]{Trap1}\label{thtp}  Let $(\A, \Ao)$ be a quasi *-algebra with unit
$e$ and let $\omega$ be a representable linear functional on $(\A, \Ao)$. Then, there
exists a (closed) cyclic *-representation ${\pi}_\omega$ of $(\A,\Ao)$, with cyclic vector $\xi_\omega$,
such that $$ \omega(a)=\ip{{\pi}_\omega(a)\xi_\omega}{\xi_\omega}, \quad \forall a \in \A.$$ This
representation is unique up to unitary equivalence.\entheo

\medskip
As in \cite[Definition 2.1]{Trap1}, given a quasi *-algebra $(\A,\Ao)$, we denote by $\QA$ the set of all sesquilinear forms on $\A\times\A$ such that
\begin{itemize}
\item[(i)] $\varphi(a,a)\geq0$ for every $a\in\A$.
\item[(ii)] $\varphi(ax,y)=\varphi(x,a^{\ast}y)$ for every $a\in\A$ and $x,y\in\Ao$
\end{itemize}

\medskip Finally, we recall that a Hilbert seminorm on a vector space $V$ is a seminorm $p$ satisfying
$$ p(a+b)^2+ p(a-b)^2 = 2 p(a)^2 +2 p(b)^2,\quad \forall a,b \in V.$$

With these definitions we can characterize representable functionals as follows.
\begin{prop} \label{thm_GNS}Let $(\A, \Ao)$ be a quasi *-algebra with unit $e$ and $\omega$ a linear functional on $\A$ satisfying (L.1) and (L.2). The following statements are equivalent.
\begin{itemize}
\item[(i)] $\omega$ is representable.
\item[(ii)] There exist a *-representation $\pi$ defined on a dense domain $\D_\pi$ of a Hilbert space $\H_\pi$ and a vector $\zeta \in \D_\pi$ such that
$$ \omega(a) = \ip{\pi(a)\zeta}{\zeta}, \quad \forall a \in \A.$$
\item[(iii)] There exists a sesquilinear form $\Omega^\omega \in \QA$ such that $$\omega(a)=\Omega^\omega (a,e), \quad \forall a \in \A.$$
\item[(iv)] There exists a Hilbert seminorm $p$ on $\A$ such that
$$|\omega(a^*x)| \leq p(a) \omega(x^*x)^{1/2}, \quad \forall x
    \in \Ao.$$
    \end{itemize}
\end{prop}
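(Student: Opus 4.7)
The plan is to prove the cyclic chain (i) $\Rightarrow$ (ii) $\Rightarrow$ (iii) $\Rightarrow$ (iv) $\Rightarrow$ (i). The first implication is essentially already in hand: Theorem \ref{thtp} supplies a cyclic *-representation $\pi_\omega$ with cyclic vector $\xi_\omega \in \D_{\pi_\omega}$ satisfying $\omega(a) = \ip{\pi_\omega(a)\xi_\omega}{\xi_\omega}$ for every $a \in \A$, so one takes $\pi = \pi_\omega$ and $\zeta = \xi_\omega$.

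For (ii) $\Rightarrow$ (iii), I would define $\Omega^\omega$ on $\A \times \A$ by $\Omega^\omega(a,b) := \ip{\pi(a)\zeta}{\pi(b)\zeta}$, which makes sense because $\zeta \in \D_\pi$ lies in the common domain of all $\pi(a)$. Sesquilinearity is immediate and $\Omega^\omega(a,a) = \norm{\pi(a)\zeta}^2 \geq 0$. The module-type identity $\Omega^\omega(ax,y) = \Omega^\omega(x, a^*y)$ for $a \in \A$, $x,y \in \Ao$ follows from the two defining properties of a *-representation, $\pi(ax) = \pi(a) \wmult \pi(x)$ and $\pi(a^*) = \pi(a)^\dagger$, combined with the fact that the weak product in $\LDH$ is engineered so that $\ip{(\pi(a) \wmult \pi(x))\zeta}{\pi(y)\zeta} = \ip{\pi(x)\zeta}{\pi(a)^\dagger \pi(y)\zeta}$. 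Since $\pi(e) = I_{\D_\pi}$, we obtain $\omega(a) = \ip{\pi(a)\zeta}{\pi(e)\zeta} = \Omega^\omega(a,e)$.

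For (iii) $\Rightarrow$ (iv), I would set $p(a) := \Omega^\omega(a,a)^{1/2}$; sesquilinearity and positivity of $\Omega^\omega$ automatically yield the parallelogram identity, so $p$ is a Hilbert seminorm on $\A$. A key preliminary computation uses property (ii) of $\QA$ twice together with (L.1): $\Omega^\omega(x,x) = \Omega^\omega(x \cdot e, x) = \Omega^\omega(e, x^*x) = \overline{\Omega^\omega(x^*x, e)} = \overline{\omega(x^*x)} = \omega(x^*x)$. Then property (ii) of $\QA$ and the Cauchy--Schwarz inequality for $\Omega^\omega$ give $|\omega(a^*x)| = |\Omega^\omega(a^*x, e)| = |\Omega^\omega(x, a)| \leq \Omega^\omega(x,x)^{1/2} \Omega^\omega(a,a)^{1/2} = \omega(x^*x)^{1/2} p(a)$, which is exactly (iv). Finally, (iv) $\Rightarrow$ (i) is almost free: (L.1) and (L.2) are standing hypotheses of the proposition, and (L.3) is exactly the bound in (iv) with $\gamma_a := p(a)$ (or $\gamma_a = p(a)+1$ when one insists on strict positivity; note (iv) forces $\omega(a^*x) = 0$ whenever $p(a) = 0$).

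The main obstacle I anticipate is the verification of the $\QA$-compatibility $\Omega^\omega(ax, y) = \Omega^\omega(x, a^*y)$ in step (ii) $\Rightarrow$ (iii), because it requires careful bookkeeping of the partial-algebra weak multiplication $\wmult$ and its adjoint behavior on the cyclic vector; the other three implications reduce to routine sesquilinear-form and Cauchy--Schwarz manipulations once the natural form $\Omega^\omega$ is in place.
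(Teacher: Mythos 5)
Your proposal is correct and follows exactly the same cyclic scheme (i)$\Rightarrow$(ii)$\Rightarrow$(iii)$\Rightarrow$(iv)$\Rightarrow$(i) as the paper: (i)$\Rightarrow$(ii) via Theorem \ref{thtp}, the form $\Omega^\omega(a,b)=\ip{\pi(a)\zeta}{\pi(b)\zeta}$ for (ii)$\Rightarrow$(iii), the seminorm $p(a)=\Omega^\omega(a,a)^{1/2}$ with Cauchy--Schwarz for (iii)$\Rightarrow$(iv), and $\gamma_a = p(a)+1$ (the paper takes $(1+p(a)^2)^{1/2}$) for (iv)$\Rightarrow$(i). The only difference is that you spell out the verifications (the $\QA$-compatibility of $\Omega^\omega$ and the identity $\Omega^\omega(x,x)=\omega(x^*x)$) that the paper declares ``easy to check,'' which is fine.
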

\begin{proof} (i) implies (ii) by Theorem \ref{thtp}. Suppose now that (ii) holds and define:
$$\Omega^\omega (a,b)= \ip{\pi(a)\zeta}{\pi(b)\zeta},\quad a,b \in \A.$$
It is then easy to check that $\Omega^\omega$ has the desired properties. This proves (iii).

Now suppose that (iii) holds and define $p(a)=\Omega^\omega(a,a)^{1/2}$. Then (iv) follows immediately from the Cauchy-Schwarz inequality.

Finally suppose that (iv) holds. 
Then, clearly
$$|\omega(a^{\ast}x)|\leq p(a)\omega(x^{\ast}x)^{\frac12}\leq \gamma_a\omega(x^{\ast}x)^{1/2}$$
where, for instance, $\gamma_a:=\left(1+p(a)^2\right)^{1/2}$. Hence, $\omega$ is representable.
\end{proof}

\berem \label{rem_L3rev} Once the GNS representation $\pi_\omega$ associated to $\omega\in \rep$ we can write the condition in (iv) of Theorem \ref{thm_GNS} as follows
$$|\omega(a^*x)| \leq (1+ \|\pi_\omega(a)\xi_\omega\|^2)^{1/2} \omega(x^*x)^{1/2}, \quad \forall a\in \A,\, x
    \in \Ao.$$
\enrem

Let now $(\A,\Ao)$ be a quasi *-algebra with unit $e$ and $\omega\in \rep$. Let $N_\omega=\{x\in \Ao:\, \omega(x^*x)=0\}$ and $\H_\omega$ the Hilbert space completion of $\Ao/N_\omega$ with respect to the inner product
$$ \ip{\lambda_\omega(x)}{\lambda_\omega(x)}=\omega(y^*x), \;x,y \in \Ao,$$
where $\lambda_\omega(x)=x+N_\omega$. $\H_\omega$ is nothing but the carrier space of the GNS representation constructed in \cite{Trap1}.
For every $a\in \A$, the linear functional $F_a$ on $\lambda_\omega(\Ao)$ defined by
$$F_a(\lambda_\omega(x))=\omega(a^*x), \quad x \in \Ao$$
is well-defined and bounded by (L.3). Hence there exists a unique $\xi(a) \in \H_\omega$ such that
$$ F_a(\lambda_\omega(x))=\omega(a^*x)=\ip{\lambda_\omega(x)}{\xi(a)}, \quad \forall x\in \Ao.$$
We define a linear map $T_\omega:\A\to \H_\omega$ by $T_\omega a=\xi(a)$, $a \in \A$. Then we have
$$ \omega(a)= \ip{T_\omega a}{\xi_\omega}, \quad \forall a \in \A,$$
where $\xi_\omega=\lambda_\omega(e)$ is the cyclic vector of the GNS representation associated to $\omega$.
Then, we can state the following
\begin{prop}\label{prop_operatorT}
Let $(\A,\Ao)$ be a quasi *-algebra with unit $e$ and $\omega\in \rep$. Then there exists a linear operator $T_\omega:\A \to \H_\omega$ such that
$$ \omega(a)= \ip{T_\omega a}{\xi_\omega}, \quad \forall a \in \A,$$
where $\xi_\omega$ is the cyclic vector of the GNS representation associated to $\omega$.
\end{prop}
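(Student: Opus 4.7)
The plan is to read off the operator $T_\omega$ directly from the construction sketched in the paragraph preceding the statement, and then check that it is well-defined, linear, and satisfies the desired identity. Since the proposition is essentially a packaging of that construction, the work amounts to verification rather than novelty.

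First I would check that the functional $F_a$ descends to the quotient $\Ao/N_\omega$, i.e.\ that $\omega(a^*x)=0$ whenever $\omega(x^*x)=0$; this is immediate from condition (L.3), which gives $|\omega(a^*x)|\le\gamma_a\omega(x^*x)^{1/2}=0$. The same inequality shows that $F_a$ is a bounded linear functional on the pre-Hilbert space $\lambda_\omega(\Ao)$ with norm at most $\gamma_a$, so it extends uniquely by continuity to all of $\H_\omega$. An application of the Riesz representation theorem then produces a unique $\xi(a)\in\H_\omega$ with
\[
\omega(a^*x)=\ip{\lambda_\omega(x)}{\xi(a)},\qquad \forall x\in\Ao.
\]
Defining $T_\omega a:=\xi(a)$ gives a map $T_\omega:\A\to\H_\omega$.

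Next I would verify linearity of $T_\omega$. For $\alpha,\beta\in\mathbb{C}$ and $a,b\in\A$, using $(\alpha a+\beta b)^*=\bar\alpha a^*+\bar\beta b^*$ one has
\[
\omega((\alpha a+\beta b)^*x)=\bar\alpha\,\omega(a^*x)+\bar\beta\,\omega(b^*x)=\ip{\lambda_\omega(x)}{\alpha\xi(a)+\beta\xi(b)},
\]
and uniqueness in the Riesz representation yields $\xi(\alpha a+\beta b)=\alpha\xi(a)+\beta\xi(b)$, i.e.\ $T_\omega$ is linear.

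Finally, I would obtain the stated identity by specialising $x=e$. This gives $\omega(a^*)=\ip{\xi_\omega}{T_\omega a}$, and taking complex conjugates together with $\omega(a^*)=\overline{\omega(a)}$ (which follows from (L.2) applied with $x=y=e$) yields $\omega(a)=\ip{T_\omega a}{\xi_\omega}$, as required. There is no real obstacle here; the only point that deserves attention is making sure the Riesz step is legitimate, which is ensured by passing from the dense subspace $\lambda_\omega(\Ao)\subset\H_\omega$ to its completion via the bound from (L.3).
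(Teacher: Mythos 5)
Your argument is correct and is essentially the same as the paper's: the authors also construct $T_\omega$ by observing that $F_a(\lambda_\omega(x))=\omega(a^*x)$ is well-defined and bounded on $\lambda_\omega(\Ao)$ by (L.3), invoking the Riesz representation on $\H_\omega$ to get $\xi(a)=T_\omega a$, and evaluating at $x=e$. Your added verifications (descent to the quotient, linearity via uniqueness in Riesz, and the conjugation step using (L.2)) are exactly the details the paper leaves implicit.
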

To every  $\omega \in \rep$ we can associate two sesquilinear forms which are useful for our discussion. The first one $\Omega^\omega$, already introduced in (iii) of Proposition \ref{thm_GNS}, can be defined through the GNS representation $\pi_\omega$, with cyclic vector $\xi_\omega$.   In fact, we put
\begin{equation}\label{eq_omom}\Omega^\omega (a,b)=\ip{\pi_\omega(a)\xi_\omega}{\pi_\omega(b)\xi_\omega}, \quad a,b \in \A.\end{equation}
As we have seen $\Omega^\omega\in \QA$ and
$ \omega(a)= \Omega^\omega (a,e),$ for every $a \in \A.$

The second sesquilinear form, which we denote by $\vp_\omega$, is defined only on $\Ao\times\Ao$ by
\begin{equation} \label{sesqu_ass} \vp_\omega(x,y)= \omega(y^*x), \quad x,y \in \Ao.\end{equation}
It is clear that $\Omega^\omega$ extends $\vp_\omega.$
It is easy to see that
\begin{itemize}
\item[(i)] $\vp_\omega(x,x) \geq 0,$ for every $x \in \Ao$.
\item[(ii)] $\vp_\omega(xy,z)=\vp_\omega (y,x^*z)$ for every $x, y, z \in \Ao$.
\end{itemize}

We define a partial order in $\rep$ as follows. If $\omega, \theta \in \rep$ we say that $\omega\leq \theta$ if $\Omega^\omega (a,a)\leq \Omega^\theta (a,a)$, for every $a\in \A$.

{By a slight modification of standard arguments (see, e.g., \cite[Proposition 9.2.3]{Ant2}), one can prove the following

\begin{prop}\label{lemma_214} Let $\omega, \theta \in \rep$ and let $\pi_\omega$ denote the GNS-representation associated to $\omega$. If $\theta \leq \omega$, there exists an operator $S\in (\pi_\omega(\A), \D_\pi)'_w$, with $0\leq S\leq I$, such that
$$\Omega^\theta(a,b)= \ip{\pi_\omega(a)\xi_\omega}{S\pi_\omega(b)\xi_\omega}, \quad \forall a,b \in \A.$$

\end{prop}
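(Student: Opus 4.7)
The plan is to carry out a Radon--Nikodym type argument: transport $\Omega^\theta$ to a bounded sesquilinear form on the GNS space $\H_\omega$, represent it by a positive contraction $S$ via Riesz, and then use the intertwining property (ii) in the definition of $\QA$ to force $S$ into the weak commutant $(\pi_\omega(\A),\D_\pi)'_w$.

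First I would set
$$B\bigl(\pi_\omega(a)\xi_\omega,\pi_\omega(b)\xi_\omega\bigr):=\Omega^\theta(a,b),\qquad a,b\in\A,$$
on the dense subspace $\pi_\omega(\A)\xi_\omega\subseteq\H_\omega$. Well-definedness reduces to showing that $\pi_\omega(a)\xi_\omega=0$ implies $\Omega^\theta(a,\cdot)=0$: the equality $\|\pi_\omega(a)\xi_\omega\|^2=\Omega^\omega(a,a)=0$ combined with the hypothesis $\theta\leq\omega$ gives $\Omega^\theta(a,a)=0$, and Cauchy--Schwarz for the positive form $\Omega^\theta$ then kills $\Omega^\theta(a,b)$ for every $b$. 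The same two ingredients yield $|B(\eta,\zeta)|\leq\|\eta\|\,\|\zeta\|$ on the dense subspace, so $B$ extends uniquely to a bounded sesquilinear form on $\H_\omega$. The Riesz representation theorem then produces a unique $S\in\mc B(\H_\omega)$ with $B(\eta,\zeta)=\ip{\eta}{S\zeta}$; positivity of $\Omega^\theta$ gives $S\geq 0$, and the bound $\Omega^\theta(a,a)\leq\|\pi_\omega(a)\xi_\omega\|^2$ gives $S\leq I$, so in particular $S=S^*$.

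To place $S$ in the weak commutant I would exploit property (ii) of $\Omega^\theta\in\QA$: for $a\in\A$ and $x,y\in\Ao$,
$$\Omega^\theta(ax,y)=\Omega^\theta(x,a^*y).$$
Translating both sides through the definition of $B$ and using $S=S^*$, this becomes
$$\ip{S\pi_\omega(a)\lambda_\omega(x)}{\lambda_\omega(y)}=\ip{S\lambda_\omega(x)}{\pi_\omega(a)^\dagger\lambda_\omega(y)}$$
for all $x,y\in\Ao$. I would then upgrade this identity to arbitrary $\xi,\eta\in\D_\pi$ by invoking the density of $\lambda_\omega(\Ao)$ in $\D_\pi$ with respect to the graph topology of $\pi_\omega$ together with the continuity of both sides in that topology. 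I expect the main obstacle to be precisely this last extension: since $\pi_\omega(a)$ may be unbounded, Hilbert-norm continuity is not available, and one has to verify that the boundedness of $S$ combined with the closedness of $\pi_\omega$ suffices to pass the identity through the graph-completion. Everything else is a faithful translation of the classical Radon--Nikodym argument from the C*-algebra setting cited by the authors.
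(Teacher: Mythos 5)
Your argument is correct and is precisely the standard Radon--Nikodym-type argument that the paper itself invokes: it gives no proof of this proposition, simply citing \cite[Proposition 9.2.3]{Ant2}, and your construction (bounded form on the GNS space via $\theta\leq\omega$ and Cauchy--Schwarz, Riesz representation giving $0\leq S\leq I$, module property (ii) of $\QA$ forcing $S$ into the weak commutant) is exactly that argument. The step you flag as the main obstacle --- extending the commutation identity from $\lambda_\omega(\Ao)$ to all of $\D_\pi$ --- goes through as you suggest, since convergence in the graph topology yields norm convergence of $\pi_\omega(a)\xi_n$ and of $\pi_\omega(a^*)\eta_n$ for every $a\in\A$, and the boundedness of $S$ handles the rest.
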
}

For future use we give the following

\begin{lemma}\label{lemma_dom} Let $\omega, \theta \in \rep$ with $\omega\leq \theta$. Then $\theta-\omega \in \rep.$
\end{lemma}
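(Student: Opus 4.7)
The plan is to exhibit a sesquilinear form $\Psi \in \QA$ that represents $\theta - \omega$ via $\Psi(a,e) = (\theta-\omega)(a)$, and then invoke Proposition \ref{thm_GNS}, (iii)$\Rightarrow$(i), to conclude representability. The natural candidate is
$$\Psi(a,b) := \Omega^\theta(a,b) - \Omega^\omega(a,b), \qquad a,b \in \A.$$
First I would verify that $\Psi \in \QA$. Sesquilinearity is inherited from the two summands; the invariance identity $\Psi(ax,y) = \Psi(x,a^*y)$ for $a \in \A$ and $x,y \in \Ao$ is obtained by subtracting the corresponding identities for $\Omega^\theta$ and $\Omega^\omega$. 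The essential point is positivity on the diagonal, $\Psi(a,a) = \Omega^\theta(a,a) - \Omega^\omega(a,a) \geq 0$, which is precisely the hypothesis $\omega \leq \theta$.

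Next I would record that $\Psi(a,e) = \theta(a) - \omega(a) = (\theta-\omega)(a)$ for every $a \in \A$, directly from the identities $\omega(\cdot) = \Omega^\omega(\cdot,e)$ and $\theta(\cdot) = \Omega^\theta(\cdot,e)$. To apply Proposition \ref{thm_GNS} I still need $\theta - \omega$ to satisfy (L.1) and (L.2). Condition (L.2) is inherited linearly from $\omega$ and $\theta$. For (L.1), the $\QA$-invariance already established, applied with $a=x^*$ and $y=e$, gives $(\theta-\omega)(x^*x) = \Psi(x^*x,e) = \Psi(x,x) \geq 0$ for every $x \in \Ao$. The implication (iii)$\Rightarrow$(i) in Proposition \ref{thm_GNS} then yields $\theta - \omega \in \rep$.

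I do not foresee a substantial obstacle: the ordering on $\rep$ is defined precisely so that $\Omega^\theta - \Omega^\omega$ is a positive sesquilinear form, and the remaining algebraic conditions for membership in $\QA$, as well as the properties (L.1) and (L.2), pass to the difference by linearity. The only mildly subtle point is noticing that characterization (iii) of representability, rather than the operational (L.3), is the convenient handle here, since it avoids the need to produce ad hoc constants $\gamma_a$ for $\theta - \omega$; alternatively one could invoke (iv) with the Hilbert seminorm $p(a) = \Psi(a,a)^{1/2}$ and obtain the required bound directly from Cauchy--Schwarz applied to $\Psi(x,a) = \Psi(a^*x,e)$.
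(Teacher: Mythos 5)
Your proof is correct and follows essentially the same route as the paper's: both hinge on observing that $\Omega^\theta-\Omega^\omega$ is a positive sesquilinear form in $\QA$ representing $\theta-\omega$, with (L.3) then coming from the Cauchy--Schwarz inequality (you package this step as Proposition \ref{thm_GNS}(iii)$\Rightarrow$(i), while the paper writes out the Cauchy--Schwarz estimate directly, exactly as in your closing remark).
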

\begin{proof} The conditions (L1) and (L2) are obviously satisfied. We check (L3). For every $a\in \A$ and $x\in \Ao$, using the Cauchy-Schwarz inequality for the positive sesquilinear form $\Omega^\theta -\Omega^\omega$, we get

\begin{align*}|(\theta-\omega)(a^*x)|&=|(\Omega^\theta -\Omega^\omega)(a^*x,e)| = |(\Omega^\theta -\Omega^\omega)(x,a)| \\ &\leq (\Omega^\theta -\Omega^\omega)(x,x)^{1/2}(\Omega^\theta -\Omega^\omega)(a,a)^{1/2}\\ &=
(\theta-\omega)(x^*x)^{1/2}(\Omega^\theta -\Omega^\omega)(a,a)^{1/2}
\end{align*}
\end{proof}

\bedefi A  quasi *-algebra $(\A,\Ao)$ is called a {\em normed quasi *-algebra} if a norm
$\|\cdot\|$ is defined on $\A$ with the properties
\begin{itemize}

\item[(i)]$\|a^*\|=\|a\|, \quad \forall a \in \A$;
\item[(ii)] $\Ao$ is dense in $\A$;
\item[(iii)]for every $x \in \Ao$, the map $R_x: a \in \A \to ax \in \A$ is continuous in
$\A$.
\end{itemize}
If $(\A,\| \cdot \|) $ is a Banach space, we say that $(\A,\Ao)$ is a {\em Banach quasi *-algebra}.\label{def} The norm topology of $\A$ will be denoted by $\tau_n$.
\findefi The continuity of the involution implies that
\begin{itemize}
\item[(iii')]for every $x \in \Ao$, the map $L_x: a \in \A \to xa \in \A$ is continuous in
$\A$.
\end{itemize}
If $x \in \Ao$, we put
$$\|x\|_0:= \max\left\{\sup_{\|a\|\leq 1}\|ax\|, \sup_{\|a\|\leq 1}\|xa\|\right\}.$$
Then,
\begin{align*}
&\|ax\|\leq \|a\|\|x\|_0, \quad \forall a \in \A, \, x \in \Ao;\\
&\|x^*\|_0 = \|x\|_0, \quad \forall  x \in \Ao.
\end{align*}

A Banach quasi *-algebra $(\A, \Ao)$ is called a {\em proper CQ*-algebra} if $\Ao[\|\cdot\|_0]$ is a C*-algebra.

\beex Easy examples of Banach quasi *-algebras are provided by $L^p$-spaces both in the commutative case and in the noncommutative one; see, e.g., Example \ref{ex_ellepi} and Section \ref{Sec5} below, and \cite{btt_meas}.
\enex

If $(\A, \Ao)$ is a normed quasi *-algebra, we denote by $\crep$ the subset of $\rep$ consisting of continuous functionals.
As shown in \cite{Frag2}, if $\omega\in \crep$, then the sesquilinear form $\vp_\omega$ defined in \eqref{sesqu_ass}, is {\em closable}; that is, $\vp_\omega(x_n,x_n)\to 0$, for every sequence $\{x_n\}\subset \Ao$ such that $$\mbox{$\|x_n\|\to 0$ and $\vp_\omega(x_n-x_m, x_n-x_m)\to 0$}.$$

In this case, $\vp_\omega$  has a closed extension $\overline{\vp}_\omega$ to a dense domain $D(\overline{\vp}_\omega)$ containing $\Ao$. Thus, a natural question arises: under which conditions one gets the equality $D(\overline{\vp}_\omega)=\A$? An answer to this question will be given in Proposition \ref{prop1}.

\medskip

Consider now the set
$$\A_{\mc R}:= \bigcap_{\omega \in {\mc R}_c(\A,\Ao)}D(\overline{\vp_\omega}).$$

If ${\mc R}_c(\A,\Ao)=\{0\}$, we put $\A_{\mc R}=\A$. Note that, if for every $\omega \in {\mc R}_c(\A,\Ao)$, $\vp_\omega$ is jointly
continuous with respect to the topology $\tau_n$ defined by the norm $\|\cdot\|$, we get
$\A_{\mc R}=\A$.

\medskip Set
$$\Ao^+:=\left\{\sum_{k=1}^n x_k^* x_k, \, x_k \in \Ao,\, n \in {\mb N}\right\}.$$
Then $\Ao^+$ is a wedge in $\Ao$ and we call the elements of $\Ao^+$ \emph{positive elements of} $\Ao$.
As in \cite{Frag2}, we call \emph{positive elements of} $\A$ the members of $\overline{\Ao^+}^{\tau_n}$. We set
$\A^+:=\overline{\Ao^+}^{\tau_n}$.

\bedefi A family of positive linear functionals $\mc F$ on
$(\A[\tau_n], \Ao)$ is called {\em sufficient} if for every $a \in
\A^+$, $a \neq 0$, there exists $\omega \in {\mc F}$ such that $\omega
(a)>0$. \findefi

 \bedefi\label{fully_rep} A normed quasi $^{\ast}$-algebra
$(\A[\tau_n], \Ao)$ is called {\em fully representable} if ${\mc
R}_c(\A,\Ao)$ is sufficient and $\A_{\mc R}=\A$.
\findefi

 \berem \label{rem_217} If $(\A,\Ao)$ has a unit $e$, the condition of sufficiency required in Definition \ref{fully_rep} joined with the following condition of positivity
\begin{equation}\tag{P} a\in\A\;\text{and}\;\omega_x(a)\geq0\;\text{for every}\;\omega\in\mathcal{R}_c(\A,\Ao)\;\text{and}\;x\in\Ao\;\;\Rightarrow\;\;a\geq0
\end{equation}
tells us that $\omega(a)=0$ for every $\omega\in\crep$ implies $a=0$.
\enrem

We denote by $\SSA$ the subset of $\QA$ consisting of all continuous sesquilinear forms $\Omega:\A\times\A\to\mathbb{C}$ such that
$$ |\Omega(a,b)|\leq \|a\|\|b\|, \quad \forall a, b\in \A.$$
By defining $$\|\Omega\|=\displaystyle\sup_{\|a\|=\|b\|=1}\left|\Omega(a,b)\right|,$$ one obviously has $\|\Omega\| \leq 1$, for every $\Omega \in \SSA$.
\bedefi\label{def1}
A normed quasi *-algebra $(\mathfrak{A}[\tau_n],\Ao)$ is called {\em *-semi\-simple} if, for every $0\neq a\in\A$, there exists $\Omega\in\mathcal{S}_{\Ao}(\A)$ such that $\Omega(a,a)>0$.
\findefi

\bedefi \label{defn_219}Let $(\A[\tau_n],\Ao)$ be a normed quasi *-algebra. Let $a\in\A$ and consider the left regular representation $L_a:\Ao\to\A$ defined as $L_a(x)=ax$. If $L_a$ is closable, for every $a\in\A$, then the normed quasi *-algebra $(\A[\tau_n],\Ao)$ is said to be {\em fully closable}.
\findefi

\berem The left regular representation in general is not continuous on $\Ao[\tau_n]$. The right regular representation $R_a:\Ao\to\A$ with $R_a(x)=xa$ is closable, for every $a \in \A$ if, and only if, $L_a$ is closable, for every $a \in \A$.
\enrem

\section{Continuity and full representability}\label{Sec3}
In this section we will investigate on the continuity of representable linear functionals on $(\A, \Ao)$.

{\begin{thm} Let $(\A, \Ao)$ be a Banach quasi *-algebra. The following statements are equivalent.
\begin{itemize}
\item[(i)] Every $\omega \in \rep$ is bounded; i.e., $\rep=\crep$.
\item[(ii)]Every *-representation $\pi$ of $(\A, \Ao)$ is weakly continuous from $\A[\|\cdot\|]$ into $\LDHpi[\tau_w]$.
\item[(iii)] For every $\omega\in \rep$, $\omega \neq 0$, there exists a nonzero $\theta \in \crep$ such that $\theta \leq \omega$.
\end{itemize}

\end{thm}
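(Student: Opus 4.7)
I plan to establish the equivalences along the cycle (i)$\Leftrightarrow$(ii), (i)$\Rightarrow$(iii), (iii)$\Rightarrow$(i). The two directions (i)$\Leftrightarrow$(ii) rest on the standard GNS machinery. For (i)$\Rightarrow$(ii), I would fix a $\ast$-representation $\pi:\A\to\LDHpi$ and $\xi\in\D_\pi$; the implication (ii)$\Rightarrow$(i) of Proposition~\ref{thm_GNS} shows that $\omega_\xi(a):=\ip{\pi(a)\xi}{\xi}$ is representable, hence continuous by hypothesis, so $|\ip{\pi(a)\xi}{\xi}|\leq C_\xi\|a\|$ for some $C_\xi>0$; polarizing in $\xi,\eta\in\D_\pi$ then yields weak continuity of $\pi$. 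Conversely, for (ii)$\Rightarrow$(i), if $\omega\in\rep$ then Theorem~\ref{thtp} gives $\omega(a)=\ip{\pi_\omega(a)\xi_\omega}{\xi_\omega}$, and the weak continuity of $\pi_\omega$ makes $\omega$ norm-continuous. Finally, (i)$\Rightarrow$(iii) is immediate with $\theta:=\omega$.

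The substantive direction is (iii)$\Rightarrow$(i). Fix $\omega\in\rep$ and consider
$$\mc F_\omega:=\{\theta\in\crep: 0\leq\theta\leq\omega\},$$
ordered by $\leq$. By Proposition~\ref{lemma_214} each $\theta\leq\omega$ corresponds to an operator $S_\theta$ in the weak commutant $(\pi_\omega(\A),\D_\pi)'_w$ with $0\leq S_\theta\leq I$, and the correspondence is monotone. A chain in $\mc F_\omega$ therefore yields an increasing $I$-bounded net of self-adjoint operators in the (SOT-closed) weak commutant; this net admits an SOT-limit $\tilde S$ there, and Proposition~\ref{lemma_214} read in reverse provides a representable $\tilde\theta$ with $\theta\leq\tilde\theta\leq\omega$ for every $\theta$ in the chain. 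Zorn's lemma selects a maximal chain whose supremum $\tilde\theta\leq\omega$ then satisfies, by Lemma~\ref{lemma_dom}, $\omega-\tilde\theta\in\rep$. If $\omega-\tilde\theta\neq 0$, hypothesis (iii) furnishes a nonzero $\theta^\ast\in\crep$ with $\theta^\ast\leq\omega-\tilde\theta$, whence $\tilde\theta+\theta^\ast$ would strictly extend the chain, contradicting maximality. Therefore $\omega=\tilde\theta$.

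The main obstacle is verifying that the SOT-supremum $\tilde\theta$ of a chain of continuous representables is itself in $\crep$: pointwise suprema of continuous functionals on a Banach space need not be continuous. I intend to handle this by combining Proposition~\ref{thm_GNS}(iv)---which expresses continuity of each $\theta_\alpha$ through a Hilbert seminorm $p_{\theta_\alpha}$ with $|\theta_\alpha(a^\ast x)|\leq p_{\theta_\alpha}(a)\theta_\alpha(x^\ast x)^{1/2}$---with the domination estimate $p_{\theta_\alpha}(a)\leq\Omega^\omega(a,a)^{1/2}$ inherited from $\theta_\alpha\leq\omega$, and then applying the Banach--Steinhaus theorem to $\{\theta_\alpha\}$ to extract the uniform bound on continuity constants needed to transfer membership in $\crep$ to the limit $\tilde\theta$.
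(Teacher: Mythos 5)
Your proposal is correct in outline and, for the crucial implication (iii)$\Rightarrow$(i), follows the same skeleton as the paper: partially order the continuous representable functionals dominated by $\omega$, show every chain has an upper bound, invoke Zorn's lemma, and derive a contradiction from maximality using Lemma \ref{lemma_dom} together with hypothesis (iii). The continuity of the chain's supremum is also settled in both arguments by the same mechanism: domination by $\omega$ gives the pointwise bound $|\theta(a)|\leq \Omega^\omega(a,a)^{1/2}\Omega^\omega(e,e)^{1/2}$, and the uniform boundedness principle then yields a uniform norm bound that passes to the pointwise limit. Where you genuinely diverge is in how the supremum of a chain is \emph{constructed}: you route the chain through the operators $S_\theta$ of Proposition \ref{lemma_214}, take the strong limit $\tilde S$ of the increasing, $I$-bounded net inside the weakly closed commutant, and then read Proposition \ref{lemma_214} ``in reverse''. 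The paper instead works directly with the quadratic forms: $\Omega^\theta(a,a)$ increases and is bounded by $\Omega^\omega(a,a)$, hence converges; the parallelogram identity survives the limit, so polarization produces $\Lambda\in\QA$ and one sets $\omega^\circ(a)=\Lambda(a,e)$. Your route works but leaves one substantive item unproved, namely the converse of Proposition \ref{lemma_214}: that an operator $0\leq \tilde S\leq I$ in $(\pi_\omega(\A),\D_\pi)'_w$ really returns a functional $\tilde\theta(a)=\ip{\pi_\omega(a)\xi_\omega}{\tilde S\xi_\omega}$ belonging to $\rep$ --- here (L3) follows from the Cauchy--Schwarz inequality for the positive form $(\zeta,\zeta')\mapsto\ip{\zeta}{\tilde S\zeta'}$, and the invariance property (ii) of $\QA$ uses the weak-commutation relations; the paper's direct limit sidesteps this entirely. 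One small inaccuracy in your last paragraph: Proposition \ref{thm_GNS}(iv) encodes representability (condition (L3)), not norm-continuity; the continuity input for Banach--Steinhaus is simply the hypothesis $\theta_\alpha\in\crep$, while the domination $\Omega^{\theta_\alpha}\leq\Omega^\omega$ supplies both the pointwise boundedness needed for Banach--Steinhaus and the (L3) constant $(1+\Lambda(a,a))^{1/2}$ for the limit functional. Finally, note that both your argument and the paper's make use of the unit $e$ (and of Theorem \ref{thtp}) although the statement as written does not assume one.
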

\begin{proof} (i) $\Rightarrow$ (ii): Let $\pi$ be a *-representation of $(\A, \Ao)$. Then, for every $\xi\in \D_{\scriptscriptstyle \pi}$ the linear functional $\omega(a)= \ip{\pi(a)\xi}{\xi}$ is representable and, therefore bounded. This easily implies that $\pi: \A[\|\cdot\|] \to \LDHpi[\tau_w]$ is continuous.

(ii) $\Rightarrow$ (iii): Let $\omega\in \rep$ and $\pi_{\omega}$ the corresponding GNS-representation (which is weakly-continuous by assumption) with cyclic vector $\xi_\omega$.
Then for every $\xi, \eta\in \D_\omega$ there exists $\gamma_{\xi,\eta}>0$ such that
$$ | \ip{\pi_\omega(a)\xi}{\eta}| \leq \gamma_{\xi,\eta} \|a\|, \quad \forall a \in \A.$$
In particular, for the cyclic vector $\xi_\omega$, we have
$$ |\omega(a)| =| \ip{\pi_\omega(a)\xi_\omega}{\xi_\omega}|\leq \gamma_{\xi_\omega,\xi_\omega} \|a\|, \quad \forall a \in \A.$$
Then (iii) holds with the obvious choice of $\theta=\omega$.

(iii) $\Rightarrow$ (i):
By the assumption, the set ${\mc K}_\omega=\{ \theta \in \hcrep: \theta \leq \omega\}$ is a nonempty partially ordered (by $\leq$) set. Let ${\mc W}$ be a totally ordered subset of ${\mc K}_\omega$.
Then $$\lim_{\theta \in {\mc W}}\theta(a)$$ exists for every $a \in \A$. Indeed, the set of numbers $\{\Omega^\theta (a,a); \theta \in {\mc W}\}$ is increasing and bounded from above by  $\omom(a,a)$.
We set, for every $a\in \H$,  $$\Lambda (a,a)= \lim_{\theta \in {\mc W}}\Omega^\theta (a,a).$$ Then $\Lambda$ satisfies the equality
$$ \Lambda(a+b, a+b)+\Lambda (a-b, a-b)=2 \Lambda(a,a) +2 \Lambda (b,b), \quad \forall a,b \in \A,$$  hence we can then define $\Lambda$ on $\A\times\A$ using the polarization identity. It is readily checked that $\Lambda \in \QA.$

Now we put $\omega^\circ (a)= \Lambda (a,e)$. Then,
$ \omega^\circ (a) = \lim_{\theta \in {\mc W}}\theta(a).$   We now prove that $\omega^\circ \in \rep$. It is clear that $\omega^\circ$ is a linear functional on $\A$ and $\omega^\circ\leq \omega$. The conditions (L1) and (L2) are obviously satisfied. We prove (L3).
Let $a\in \A$ and $x \in \Ao$. Then, taking into account Remark \ref{rem_L3rev},
\begin{align*} |\omega^\circ(a^*x)|&= \lim_{\theta \in {\mc W}}|\theta(a^*x)|\leq { \lim_{\theta \in {\mc W}}}(1 +\Omega^\theta(a,a))^{1/2}\lim_{\theta \in {\mc W}}\theta(x^*x)^{1/2} \\ & \leq (1 +{\Lambda(a,a)})^{1/2}\omega^\circ(x^*x)^{1/2} .\end{align*}
We show now that $\omega^\circ$ is bounded. For every $a \in \H$ the set $\{|\theta (a)|; \theta \in {\mc W}\}$ is bounded; indeed, for every $\theta \in {\mc W}$, we get
$$ |\theta (a)|=|\Omega^\theta(a,e)| \leq\Omega^\theta(a,a)^{1/2}\Omega^\theta (e,e)^{1/2} \leq \omom (a,a)^{1/2} \omom (e,e)^{1/2}.$$
By the uniform boundedness principle, we conclude that there exists $\gamma>0$ such that $|\theta (a)|\leq \gamma \|a\|$, for every $\theta \in {\mc W}$ and for every $a \in \H$.
Hence, $$|\omega^\circ (a)| = \lim_{\theta \in {\mc W}}|\theta(a)| \leq \gamma \|a\|, \quad \forall a \in \H.$$
Then, ${\mc W}$ has an upper bound. Then, by Zorn's lemma, ${\mc K}_\omega$ has a maximal element $\omega^{\scriptscriptstyle \bullet}$. It remains to prove that $\omega=\omega^{\scriptscriptstyle \bullet}$. Assume, on the contrary that $\omega > \omega^{\scriptscriptstyle \bullet}$.
Let us consider the functional $\omega-\omega^{\scriptscriptstyle \bullet}$, which is nonzero and representable by Lemma \ref{lemma_dom}. Then, there exists $\sigma\in \crep$ such that $\omega-\omega^{\scriptscriptstyle \bullet}\geq \sigma$. Hence, $\omega\geq \omega^{\scriptscriptstyle \bullet} + \sigma$, contradicting the maximality of $\omega^{\scriptscriptstyle \bullet}$. Then $\omega=\omega^{\scriptscriptstyle \bullet}$ and, therefore, $\omega$ is continuous.
\end{proof}

\berem The equivalence of (i) and (ii) of the previous theorem holds also in the case when $(\A,\Ao)$ is only a normed quasi *-algebra. The proof of (iii) $\Rightarrow$ (i) is inspired by a well known result of the theory of Banach *-algebras \cite[Lemma 5.5.5]{dales}.
\enrem

\beex \label{ex_ellepi}{Let us  consider the Banach quasi *-algebra (actually, a proper CQ*-algebra)  $(L^p(I, d\lambda), C(I))$ where $I$ is a compact interval of the real line and $\lambda$ the Lebesgue measure. } For $1\leq p<2$ (see \cite{Bag4,Frag2}), the sole representable and continuous functional is the trivial one. Going over the threshold $p=2$, these Banach quasi *-algebras are fully representable, thus we have a so huge amount of representable and continuous functionals to separate their points.
\enex

In general, a continuous functional $\omega$ satisfying (L1) and (L2) need not satisfy (L3) and therefore it is not representable, as the next examples show.

\beex
Let us  consider the Banach quasi *-algebra $(L^2(I, d\lambda), C(I))$ ($I$ and $\lambda$ as in Example \ref{ex_ellepi}). Let $h\in L^2(I)$, $h\geq 0$. We define
$$ \omega(f)= \int_I f(x)h(x)d\lambda(x), \quad f \in L^2(I).$$
It is clear that $\omega$ satisfies (L.1) and (L.2).
Condition (L.3) requires that, for every $f\in L^2(I)$, there exists $\gamma_f>0$ such that
$$\left| \int_I \overline{f(x)} \phi(x) h(x)d\lambda(x) \right| \leq \gamma_f\left(\int_I |\phi(x)|^2 h(x) d\lambda(x)\right)^{1/2} , \quad \forall \phi\in C(I).$$
Since $h(x)>0$ a.e., this inequality implies that $f\in L^2(I, hd\lambda)$. Were $L^2(I, d\lambda)\subseteq L^2(I, hd\lambda)$ then we should have that $f\sqrt{h} \in L^2(I, d\lambda)$, for every $f \in L^2(I, d\lambda)$. This in turn implies that $h\in L^\infty(I, d\lambda)$.
So it suffices to pick $h\in L^2(I, d\lambda)\setminus L^\infty(I, d\lambda)$ to get the desired example.
\enex

\beex \label{ex-sesqforms} Let $\D$ be a dense domain  in a Hilbert
space $\H $ and $\|\cdot\|_1$ a norm on $\D$, stronger than the
Hilbert norm $\|\cdot \|$. Let $\textsf{B}(\D,\D)$ denote \index{$\textsf{B}(\D,\D)$} the
vector space of all {\em jointly} continuous sesquilinear forms on
$\D\times \D$, with respect to $\|\cdot\|_1$. The map $\vp \to \vp^*$, with $$ \vp^*(\xi,\eta)=
\overline{\vp(\eta,\xi)},$$ defines an involution in
$\textsf{B}(\D,\D)$ (see, \cite[Example 3.8]{Trap1}).

 We denote by $\mathfrak{L}^\dag(\D)$  the *--subalgebra
of ${\mathcal L}^\dag(\D)$ consisting of all operators $A\in {\mathcal L}^\dag(\D)$ such
that both $A$ and $A^\dag$ are continuous from $\D[\|\cdot\|_1]$
into itself.

 Every $A \in \mathfrak{L}^\dag(\D)$ defines a  sesquilinear
form $\vp_A\in \textsf{B}(\D,\D)$  by 
$$ \vp_A(\xi,\eta) =\ip{A\xi}{\eta}, \quad \xi, \eta\in \D.$$
$$\textsf{B}\ad(\D)=\{\vp_A : A\in \mathfrak{L}^\dag(\D)\}.$$  It is easily seen that
$\vp_A^*=\vp_{A\ad}$, for every $A \in  \mathfrak{L}^\dag(\D)$. \\ For $\vp
\in \textsf{B}(\D,\D)$, $\vp_A \in \textsf{B}\ad(\D)$, the multiplications are defined as
\begin{align*}
& (\vp\circ \vp_A)(\xi, \eta)= \vp(A\xi,\eta), \quad \xi,\eta \in
\D\\
&(\vp_A\circ \vp)(\xi, \eta)= \vp(\xi,A\ad\eta), \quad \xi,\eta \in \D.
\end{align*}
With these operations and involution, $(\textsf{B}(\D,\D),
\textsf{B}\ad(\D))$ is a quasi *-algebra.
A norm on $\textsf{B}(\D,\D)$ is defined by
$$ \|\vp\|:= \sup_{\|\xi\|_1=\|\eta\|_1=1}|\vp (\xi,\eta)|.$$
Then the pair $(\overline{\textsf{B}\ad(\D)},\textsf{B}\ad(\D))$, where $\overline{\textsf{B}\ad(\D)}$ denotes the $\|\cdot\|$-closure of $\textsf{B}\ad(\D)$, is a Banach quasi*-algebra.

 For every $\xi \in \D$, we define \index{$ \omega_\xi$}
$$ \omega_\xi (\vp) =\vp(\xi, \xi), \quad \vp \in \textsf{B}(\D,\D).$$
Then $\omega_\xi$ is a linear functional on $\textsf{B}(\D,\D)$. 
%
%

The functional $\omega_\xi$ is representable if, and only if, 
$\vp$ is bounded in the first variable on the subspace
${\mc M}_\xi= \{A\xi; A \in \mathfrak{L}^\dag(\D) \}$. 
 Hence,  every $\omega_\xi$ is continuous but it need not be representable. \enex

As we mentioned before, if $\omega\in \crep$, then the form $\vp_\omega$ defined in \eqref{sesqu_ass} is closable. For Banach quasi *-algebras this result can be improved.
\begin{prop}\label{prop1}
Let $(\A, \Ao)$ be a Banach quasi *-algebra with unit $e$,  $\omega \in \crep$ and $\vp_{\omega}$ the associated sesquilinear form on $\Ao \times \Ao$ defined as in \eqref{sesqu_ass}. Then $D(\overline{\vp}_\omega)=\A$;  hence $\overline{\vp}_\omega$ is everywhere defined and bounded.
\end{prop}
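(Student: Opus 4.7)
The plan is to deduce the proposition from the closed graph theorem applied to the operator $T_\omega \colon \A \to \H_\omega$ furnished by Proposition \ref{prop_operatorT}, and then to read off the closure of $\vp_\omega$ from $T_\omega$.

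First, I would show that $T_\omega$ is a closed operator from the Banach space $\A$ into the Hilbert space $\H_\omega$. Suppose $a_n \to a$ in $\A$ and $T_\omega a_n \to \zeta$ in $\H_\omega$. For every $x \in \Ao$, by the defining relation
$$\ip{\lambda_\omega(x)}{T_\omega a_n} = \omega(a_n^* x),$$
the left-hand side tends to $\ip{\lambda_\omega(x)}{\zeta}$. On the other hand, continuity of the involution and of right multiplication by $x \in \Ao$ give $\|(a_n - a)^* x\| \leq \|a_n - a\|\,\|x\|_0 \to 0$, so continuity of $\omega$ yields $\omega(a_n^* x) \to \omega(a^* x) = \ip{\lambda_\omega(x)}{T_\omega a}$. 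Since $\lambda_\omega(\Ao)$ is dense in $\H_\omega$, we conclude $T_\omega a = \zeta$. Thus $T_\omega$ is closed, hence bounded: there exists $K>0$ with $\|T_\omega a\| \leq K\|a\|$ for all $a\in\A$.

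Next, I would define the sesquilinear form $\widetilde{\vp}\colon \A\times\A\to\mathbb{C}$ by
$$\widetilde{\vp}(a,b) = \ip{T_\omega a}{T_\omega b}, \quad a,b\in\A.$$
Boundedness of $T_\omega$ gives $|\widetilde{\vp}(a,b)|\leq K^2\|a\|\,\|b\|$, so $\widetilde{\vp}$ is everywhere defined and bounded. Moreover, for $x,y\in\Ao$ one has $T_\omega x = \lambda_\omega(x)$, whence $\widetilde{\vp}(x,y) = \ip{\lambda_\omega(x)}{\lambda_\omega(y)} = \omega(y^*x) = \vp_\omega(x,y)$, so $\widetilde{\vp}$ extends $\vp_\omega$.

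Finally, I would identify $\widetilde{\vp}$ with $\overline{\vp}_\omega$. Given $a\in\A$, pick $\{x_n\}\subset\Ao$ with $\|x_n-a\|\to 0$; then by boundedness of $T_\omega$, $\lambda_\omega(x_n) = T_\omega x_n \to T_\omega a$ in $\H_\omega$, so $\vp_\omega(x_n-x_m, x_n-x_m) = \|\lambda_\omega(x_n)-\lambda_\omega(x_m)\|^2 \to 0$. Hence $a\in D(\overline{\vp}_\omega)$, giving $D(\overline{\vp}_\omega)=\A$. The same approximation shows $\overline{\vp}_\omega(a,b) = \lim_n \vp_\omega(x_n,y_n) = \widetilde{\vp}(a,b)$, so $\overline{\vp}_\omega$ inherits the bound $|\overline{\vp}_\omega(a,b)|\leq K^2\|a\|\,\|b\|$.

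The main technical obstacle is the verification that $T_\omega$ is closed, because it is precisely there that the continuity hypothesis on $\omega$ interacts with the bimodule continuity $\|(a_n-a)^*x\|\leq\|a_n-a\|\,\|x\|_0$; without both ingredients the closed-graph argument would fail. Once $T_\omega$ is known to be bounded, the rest of the proof is structural.
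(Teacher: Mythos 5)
Your proof is correct, and it reaches the conclusion by a route that differs from the paper's only in where the closed-graph principle is applied. The paper works at the level of the sesquilinear form: from the estimate $|\omega(y^*ax)|\le\gamma\|a\|\|x\|_0\|y\|_0$ it shows that the everywhere-defined form $\Omega^\omega(a,b)=\ip{\pi_\omega(a)\xi_\omega}{\pi_\omega(b)\xi_\omega}$, which extends $\vp_\omega$, is closable, hence closed and bounded, and then identifies it with $\overline{\vp}_\omega$. You instead apply the closed graph theorem to the operator $T_\omega$ of Proposition \ref{prop_operatorT}; since $T_\omega a=\pi_\omega(a)\xi_\omega$, your form $\widetilde{\vp}(a,b)=\ip{T_\omega a}{T_\omega b}$ is literally $\Omega^\omega$, so the two arguments are the same computation in different packaging, both resting on the continuity of $\omega$, the module bound $\|(a_n-a)^*x\|\le\|a_n-a\|\|x\|_0$, and the density of $\lambda_\omega(\Ao)$ in $\H_\omega$. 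In fact your closedness argument for $T_\omega$ is exactly the paper's proof of (i)$\Rightarrow$(ii) in Proposition \ref{prop_charuniformly_rep}, so you have in effect derived Proposition \ref{prop1} as a corollary of that later result --- a clean reorganization. One small point worth making explicit: the closability of $\vp_\omega$, needed for $\overline{\vp}_\omega$ to be well defined before you identify it with $\widetilde{\vp}$, is itself a consequence of the boundedness of $T_\omega$ (any $\|\cdot\|$-null, $\vp_\omega$-Cauchy sequence $\{x_n\}\subset\Ao$ satisfies $\lambda_\omega(x_n)=T_\omega x_n\to 0$); your final approximation argument contains this implicitly, so no appeal to the closability result quoted from the literature is required.
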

\begin{proof} Since $\omega$ is representable, there exists a Hilbert space $\H_\omega$, a linear map $\lambda_\omega: \Ao \to \H_\omega$ and a *-representation $\pi_\omega$ with values in $\LDHr$ such that
$$ \omega(y^*ax)= \ip{\pi_\omega(a)\lambda_\omega(x)}{\lambda_\omega(y)}, \quad \forall a \in \A, \, x,y \in \Ao. $$
Then, by the properties of the norm on $(\A,\Ao)$, for every $a \in \A$ and $x,y \in \Ao$,
\begin{equation}\label{eq_one} |\ip{\pi_\omega(a)\lambda_\omega(x)}{\lambda_\omega(y)}|=|\omega(y^*ax)|\leq \gamma \|a\|\|x\|_0\|y\|_0.  \end{equation}
Now, consider the sesquilinear form $\omom$ defined in \eqref{eq_omom}. As already noticed,
$\omom$ extends $\vp_{\omega}$. It remains to show that $\omom$ is closable.

Suppose now that $\{a_n\}$ is a sequence in $\A$ such that $\|a_n \|\to 0$ and $\omom(a_n-a_m,a_n-a_m)=\|\pi_\omega(a_n-a_m)\xi_\omega\|^2\to 0$.
Then the sequence $\pi_\omega(a_n)\xi_\omega$ converges to a vector $\zeta\in \H_\omega$. Thus,
$$\ip{\pi_\omega(a_n)\xi_\omega}{\lambda_\omega(y)}\to \ip{\zeta}{\lambda_\omega(y)}, \quad \forall y \in \Ao.$$
Using \eqref{eq_one}, we obtain $\zeta=0$. Hence $\omom (a_n,a_n)\to 0$; i.e., $\Omega^\omega$ is closable. Thus $\omom$ is closed and everywhere defined, hence bounded.
It is clear that $\omom=\overline{\vp}_\omega$.
\end{proof}}
{
In a normed quasi-algebra it appears natural to strengthen a bit the notion of representable functional as follows.
\bedefi Let $(\A,\Ao)$ be a normed quasi *-algebra. We say that $\omega\in \rep$ is {\em uniformly representable} if there exists $\gamma >0$ such that
\begin{equation}\label{eqn_L3unif}|\omega(a^*x)|\leq \gamma \|a\| \omega(x^*x)^{1/2}, \quad \forall a \in \A, x\in \Ao.\end{equation}
 The set of {uniformly representable} linear functionals is denoted by $\curep$.
\findefi

\begin{prop}\label{prop_charuniformly_rep} Let $(\A,\Ao)$ be a Banach quasi *-algebra, $\omega\in \rep$ and $T_\omega$ the operator defined in Proposition \ref{prop_operatorT}.
The following statements are equivalent.
\begin{itemize}
\item[(i)]$\omega \in \crep$; i.e., $\omega$ is bounded.
\item[(ii)] $T_\omega$ is bounded.
\item[(iii)]$\omega\in \curep$; i.e., $\omega$ is uniformly representable.
\end{itemize}
\end{prop}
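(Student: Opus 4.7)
The plan is to close the cycle $(\mathrm{iii}) \Rightarrow (\mathrm{ii}) \Rightarrow (\mathrm{i}) \Rightarrow (\mathrm{iii})$. The first two implications are essentially formal duality arguments using the defining relation
$$ \omega(a^*x) = \ip{\lambda_\omega(x)}{T_\omega a}, \quad a \in \A,\; x \in \Ao,$$
from Proposition \ref{prop_operatorT}, while the substantive implication is $(\mathrm{i}) \Rightarrow (\mathrm{iii})$, which will rest on Proposition \ref{prop1}.

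For $(\mathrm{iii}) \Rightarrow (\mathrm{ii})$ I would observe that density of $\lambda_\omega(\Ao)$ in $\H_\omega$ yields
$$ \|T_\omega a\| = \sup\{\,|\omega(a^*x)| : x \in \Ao,\; \omega(x^*x) \leq 1\,\};$$
the uniform inequality \eqref{eqn_L3unif} then bounds the right-hand side by $\gamma \|a\|$, so that $\|T_\omega\| \leq \gamma$. For $(\mathrm{ii}) \Rightarrow (\mathrm{i})$ I plug $a$ into $\omega(a) = \ip{T_\omega a}{\xi_\omega}$ and apply Cauchy--Schwarz in $\H_\omega$ to obtain $|\omega(a)| \leq \|T_\omega\|\,\|\xi_\omega\|\,\|a\|$, which gives continuity.

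For the crucial step $(\mathrm{i}) \Rightarrow (\mathrm{iii})$ I would invoke Proposition \ref{prop1}: since $\omega \in \crep$, the associated sesquilinear form $\Omega^\omega \in \QA$ (which coincides with $\overline{\vp}_\omega$) is everywhere defined and bounded on $\A \times \A$, so there exists $M > 0$ with $\Omega^\omega(a,a) \leq M \|a\|^2$ for every $a \in \A$. Using the $\QA$-property $\Omega^\omega(a^*x, e) = \Omega^\omega(x, a)$ together with $\omega(c) = \Omega^\omega(c, e)$, I rewrite $\omega(a^*x) = \Omega^\omega(x, a)$, and the Cauchy--Schwarz inequality for $\Omega^\omega$ then yields
$$ |\omega(a^*x)| \leq \Omega^\omega(x,x)^{1/2}\Omega^\omega(a,a)^{1/2} = \omega(x^*x)^{1/2}\,\Omega^\omega(a,a)^{1/2} \leq \sqrt{M}\,\|a\|\,\omega(x^*x)^{1/2},$$
which is precisely the uniform representability condition with constant $\gamma = \sqrt{M}$.

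The main obstacle is this last implication: it is not at all automatic that pointwise continuity of $\omega$ upgrades to the uniform estimate \eqref{eqn_L3unif}, since the constant $\gamma_a$ appearing in (L.3) is \emph{a priori} allowed to depend on $a$. The real work is done inside Proposition \ref{prop1}, where the Banach (as opposed to merely normed) hypothesis is used to extend $\vp_\omega$ from $\Ao \times \Ao$ to a bounded sesquilinear form on all of $\A \times \A$; once this extension is available, the Cauchy--Schwarz manipulation above is what transfers a uniform bound on the form to a uniform bound on the functional, thereby eliminating the dependence of $\gamma_a$ on $a$.
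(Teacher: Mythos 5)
Your proposal is correct, but it routes the equivalences in the opposite direction from the paper. The paper's substantive step is (i) $\Rightarrow$ (ii): one shows that the everywhere-defined operator $T_\omega$ is \emph{closable} (if $\|a_n\|\to 0$ and $T_\omega a_n\to\xi$, then $\ip{\lambda_\omega(x)}{\xi}=\lim_n\omega(a_n^*x)=0$ for all $x\in\Ao$ because $|\omega(a_n^*x)|\leq c\|a_n\|\,\|x\|_0$, whence $\xi=0$ by density) and then invokes the closed graph theorem; (ii) $\Rightarrow$ (iii) is the same Cauchy--Schwarz estimate $|\omega(a^*x)|=|\ip{\lambda_\omega(x)}{T_\omega a}|\leq\|T_\omega\|\,\|a\|\,\omega(x^*x)^{1/2}$ you use elsewhere, and (iii) $\Rightarrow$ (i) is dismissed as obvious. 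You instead make (i) $\Rightarrow$ (iii) the substantive step, delegating the completeness argument to Proposition \ref{prop1} (everywhere-definedness and boundedness of $\overline{\vp}_\omega=\Omega^\omega$) and then pulling the uniform constant out of $\Omega^\omega(a,a)\leq M\|a\|^2$ via Cauchy--Schwarz for the form; your (iii) $\Rightarrow$ (ii) via $\|T_\omega a\|=\sup\{|\omega(a^*x)|:\omega(x^*x)\leq 1\}$ and (ii) $\Rightarrow$ (i) are clean and correct. In substance both arguments use the Banach hypothesis through a closed-graph-type principle --- the paper applies it directly to the operator $T_\omega$, you apply it (inside Proposition \ref{prop1}) to the sesquilinear form --- so neither is more elementary, but yours makes explicit how uniform representability follows from boundedness of the form, which the paper only establishes at the operator level. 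One minor point: your step (i) $\Rightarrow$ (iii) uses the unit $e$ (as does Proposition \ref{prop1}), which is implicit throughout since $T_\omega$ itself is built from the unital GNS construction; it is worth flagging that the unital hypothesis is being used.
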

\begin{proof}
(i)$\Rightarrow$(ii): Since $T_\omega$ is everywhere defined, it is enough to prove that $T_\omega$ is closable.
Let $\{a_n\}$ be a sequence in $\A$ such that $\|a_n\|\to 0$ and $T_\omega a_n \to \xi \in \H_\omega$. Then, for every $x\in \Ao$,
$$ \ip{\lambda_\omega(x)}{\xi}=\lim_{n\to \infty}\ip{\lambda_\omega(x)}{T_\omega a_n}=\lim_{n\to \infty}\omega(a_n^*x).$$
By the asumptions and the properties of the norm in $(\A,\Ao)$, we have
$$|\omega(a_n^*x)| \leq  c\|a_n^*x\|\leq c\|a_n\|\|x\|_0 \to 0.$$
for some $c>0$. Hence,
$$\ip{\lambda_\omega(x)}{\xi}=0, \quad \forall x \in \Ao.$$
The density of $\lambda_\omega(\Ao)$ in $\H_\omega$ implies that $\xi=0$, thus $\T_\omega$ is closable. The statement folows by the closed graph theorem.

(ii)$\Rightarrow$(iii): If $T_\omega$ is bounded, then
$$|\omega(a^*x)|=|\ip{\lambda_\omega(x)}{T_\omega a}|\leq \|\lambda_\omega(x)\| \|T_\omega a\|\leq \|T_\omega\|\|a\| \omega(x^*x)^{1/2}$$
for every $a \in \A$ and $x\in \Ao$, that is $\omega \in \curep$.

(iii)$\Rightarrow$(i): This is obvious. \end{proof}
}

Using the previous results, we conclude this section with giving some conditions for the full-represen\-tabi\-lity of a Banach quasi *-algebra.

{ \begin{thm}\label{thm_fullrep_semis} Let $(\A, \Ao)$ be a Banach quasi *-algebra with unit $e$. The following statements are equivalent. 
\begin{itemize}
\item[(i)]$\crep$ is sufficient.
\item[(ii)]$(\A,\Ao)$ is fully representable.
\end{itemize}
If the condition of positivity $(P)$ holds, (i) and  (ii) are equivalent to the following
\begin{itemize}
\item[(iii)]$(\A,\Ao)$ is *-semisimple.
\end{itemize}
\end{thm}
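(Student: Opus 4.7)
The plan is to handle the three implications in turn, using Proposition \ref{prop1} as the key input.

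The equivalence (i) $\Leftrightarrow$ (ii) is essentially free: Proposition \ref{prop1} tells us that in a Banach quasi *-algebra every $\omega \in \crep$ satisfies $D(\overline{\vp_\omega}) = \A$, so $\A_{\mc R} = \A$ automatically. Thus the two clauses in the definition of full representability collapse to sufficiency of $\crep$, which is (i).

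For (i) $\Rightarrow$ (iii) under (P), I will invoke Remark \ref{rem_217}: sufficiency of $\crep$ together with (P) forces $\omega(a)=0$ for every $\omega \in \crep$ to imply $a=0$. So given $0 \neq a \in \A$, there is $\omega \in \crep$ with $\omega(a) \neq 0$. Its associated sesquilinear form $\Omega^\omega \in \QA$ is everywhere defined and bounded by Proposition \ref{prop1}, and after rescaling by its norm it lies in $\SSA$. Since $\omega(a) = \Omega^\omega(a, e)$, the Cauchy--Schwarz inequality gives $\Omega^\omega(a, a) > 0$, yielding *-semisimplicity.

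The delicate direction is (iii) $\Rightarrow$ (i). Given $0 \neq a \in \A^+$, use *-semisimplicity to fix $\Omega \in \SSA$ with $\Omega(a, a) > 0$. I plan to produce a continuous representable witness of the form $\omega_x(b) := \Omega(bx, x) = \Omega(x^*bx, e)$ for a suitable $x \in \Ao$. Such $\omega_x$ is bounded by the module inequality $\|bx\| \leq \|b\|\|x\|_0$, and it is representable because it arises as the canonical $z$-perturbation (with $z=x$) of $\omega'(c) := \Omega(c, e)$, which itself lies in $\rep$ by Proposition \ref{thm_GNS}. What remains is to find $x \in \Ao$ with $\Omega(ax, x) > 0$. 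Since $a = a^*$ (being a norm limit of Hermitian elements of $\Ao^+$), the module property $\Omega(ax, y) = \Omega(x, a^*y)$ from the definition of $\QA$, together with the conjugate symmetry of the positive form $\Omega$, make $B(x, y) := \Omega(ax, y)$ into a Hermitian sesquilinear form on $\Ao \times \Ao$. If $B$ vanished on the diagonal, polarization would give $B \equiv 0$; setting $x = e$ then yields $\Omega(a, y) = 0$ for all $y \in \Ao$, and density of $\Ao$ in $\A$ plus continuity of $\Omega$ would force $\Omega(a, a) = 0$, contradicting our choice.

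The main obstacle, as I see it, is the Hermiticity check for $B$ in the last step: it rests on the delicate interplay of $a = a^*$, the module identity of $\QA$, and the automatic conjugate symmetry of the positive form $\Omega$. Once this is in place, the remaining steps are straightforward applications of density and continuity.
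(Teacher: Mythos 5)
Your treatment of (i)$\Leftrightarrow$(ii) and of (i)$\Rightarrow$(iii) under (P) is essentially the paper's own argument: Proposition \ref{prop1} kills the condition $\A_{\mc R}=\A$, and the correspondence $\omega\mapsto \overline{\vp}_\omega/\|\overline{\vp}_\omega\|$ together with Cauchy--Schwarz and Remark \ref{rem_217} gives *-semisimplicity (the paper phrases it contrapositively, but it is the same computation). For (iii)$\Rightarrow$(i) the paper simply cites \cite[Ex.\ 4.4]{Bag2}, so your self-contained argument via the functionals $\omega_x(b)=\Omega(bx,x)$ is a genuine addition; the construction is sound ($\omega'=\Omega(\cdot,e)$ satisfies (L.1)--(L.2) by the module identity and Hermiticity of the positive form $\Omega$, is representable by Proposition \ref{thm_GNS}(iii), its $x$-perturbations are representable by Remark 2.7(b), and continuity follows from $|\Omega(bx,x)|\le\|b\|\,\|x\|_0\|x\|$).

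There is, however, one point you have misdiagnosed. Sufficiency demands $\omega_x(a)>0$, not merely $\omega_x(a)\neq 0$, and Hermiticity of $B(x,y)=\Omega(ax,y)$ only guarantees that $B(x,x)$ is real; a Hermitian form can take negative diagonal values. What saves the argument is precisely the hypothesis $a\in\A^+$: writing $a=\lim_n a_n$ with $a_n=\sum_k z_{nk}^*z_{nk}\in\Ao^+$, the module identity gives $\Omega(a_nx,x)=\sum_k\Omega(z_{nk}x,z_{nk}x)\ge 0$, and since $\Omega\in\SSA$ is jointly continuous and $b\mapsto bx$ is continuous, $B(x,x)=\Omega(ax,x)=\lim_n\Omega(a_nx,x)\ge 0$ for every $x\in\Ao$. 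So $B$ is a \emph{positive} form, and your polarization/density argument (which correctly rules out $B\equiv 0$ using $B(e,\cdot)=\Omega(a,\cdot)$ and $\Omega(a,a)>0$) then yields some $x$ with $\omega_x(a)=B(x,x)>0$, as required. In short: the ``delicate interplay'' is not the Hermiticity check, which is routine, but the positivity of $B$ on the diagonal; once you insert the displayed limit argument the proof is complete.
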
}
\begin{proof} {The equivalence between (i) and (ii) follows from the very definitions and from Proposition \ref{prop1}.

Suppose now that the condition of positivity $(P)$ is valid. We will prove that (iii) is equivalent to (ii) $\sim$ (i).}

(ii) $\Rightarrow$ (iii): Let $\mathcal{S}_{\Ao}(\A)$ be as in Definition \ref{def1}. First, we notice that every $\Omega\in\mathcal{S}_{\Ao}(\A)$ can be written as $\overline{\vp}_{\omega}$, for some $\omega\in\mathcal{R}_c(\A,\Ao)$. Indeed, if we put
$$\omega_{\Omega}(a):=\Omega(a,e), \quad a\in\A,$$
then, it is easily seen that $\omega_{\Omega}$ is continuous and representable, so $\omega_{\Omega}\in\mathcal{R}_c(\A,\Ao)$
and $\overline{\vp}_{\omega_{\Omega}}=\Omega$.

On the other hand, consider a linear functional $0\neq\omega\in\crep$ and let $\overline{\vp}_{\omega}$ be the sesquilinear form associated to it as in \eqref{sesqu_ass}. By Proposition \ref{prop1} $D( \overline{\vp}_\omega)=\A$, thus $\overline{\vp}_\omega$ is bounded. If we put $\vp'_\omega= \overline{\vp}_\omega /\|\overline{\vp}_\omega\|$, then $\vp'_\omega\in \SSA.$

Let $a\in\A$ be such that $\Omega(a,a)=0$ for every $\Omega\in\mathcal{S}_{\Ao}(\A)$. For what we have just shown, it enough to prove that, if $\overline{\vp}_{\omega}(a,a)=0$, for every $\omega\in\mathcal{R}_c(\A,\Ao)$, then $a=0$.
We have
$$|\omega(a)|=|\overline{\vp}_{\omega}(a,e)|\leq \overline{\vp}_{\omega}(e,e)^{1/2}\overline{\vp}_{\omega}(a,a)^{1/2}=0$$
Then using condition (P) we get the statement.

(iii)$\Rightarrow$(i): This can be proved with the same argument used in \cite[Ex. 4.4]{Bag2}. 
\end{proof}

\section{Hilbert quasi *-algebras}\label{Sec4}
Among Banach quasi *-algebras, a distinguished role is played by the completion of a Hilbert algebra with respect to the norm defined by its inner product. This object will be called a {\em Hilbert quasi *-algebra}.
We start with recalling the definition of {Hilbert algebra} (see, e.g. \cite[Section 11.7]{Palmer}).

\bedefi \label{ex_Hilbert algebras}A Hilbert algebra  is a *-algebra
$\Ao$ which is also a pre-Hilbert space with inner product
$\ip{\cdot}{\cdot}$ such that
\begin{itemize}
\item[(i)]The map $y\mapsto xy$ is continuous with respect to the norm
defined by the inner product.
\item[(ii)] $\ip{xy}{z}=\ip{y}{x^*z}$ for all $x,y,z \in \Ao.$
\item[(iii)]
$\ip{x}{y}=\ip{y^*}{x^*}$  for all $x,y \in \Ao$.
\item[(iv)]$\Ao^2$ is total in $\Ao$.
\end{itemize}
\findefi
\berem From (ii) and (iii) it follows also that
$$\ip{xy}{z}=\ip{x}{zy^*}, \quad \forall x,y,z \in \Ao.$$
\enrem
Let $\H$ denote the Hilbert space completion of $\Ao$ with respect to the norm defined by the inner product. The involution of $\Ao$ extends to the whole of $\H$, since (iii) implies that $^*$ is isometric. The multiplication $\xi x$ (or $x\xi$) of an element $\xi$ of $\H$ and an element $x \in \Ao$ is defined by an obvious limit procedure. To avoid trivial situations, we assume that
\begin{itemize}\item[(A)]\; If $\xi \in \H$ and $\xi x=0$, for every $x \in \Ao$, then $\xi=0$.\end{itemize} With these operations, $(\H, \Ao)$ is a Banach
quasi *-algebra which we name {\em Hilbert quasi *-algebra}.

By the definition itself, a Hilbert quasi *-algebra $(\H,\Ao)$ is *-semisimple and, by Theorem \ref{thm_fullrep_semis}, it is fully representable.

As in Definition \ref{defn_219}, we consider, for every $\xi \in \H$, the following operators:
$$ L_\xi: \Ao \to \H, \; L_\xi x= \xi x$$
and

$$ R_\xi: \Ao \to \H, \; R_\xi x= x\xi .$$

\belem \label{lemma_21} Every Hilbert quasi *-algebra is fully closable; i.e., for every $\xi \in \H$, the operators $L_\xi$, $R_\xi$ are closable and, in addition, $L_{\xi^*}\subset (L_\xi)^*$, $R_{\xi^*}\subset (R_\xi)^*$. Hence, $L_\xi \in \LAoH$, for every $\xi\in \H$. Moreover, the map
$$ \xi \in \H \to L_\xi \in \LAoH$$ is injective and, if $\eta \in D(\overline{L}_\xi)$ then $\eta x\in D(\overline{L}_\xi)$, for every $x\in \Ao$.
\enlem
\begin{proof}
It is enough to show that $L_{\xi^*}\subset (L_\xi)^*$, so that $L_\xi$ has a densely defined adjoint. Indeed,
if $x,y\in\Ao$, we have
$$\ip{L_{\xi}x}{y}=\ip{\xi x}{y}=\ip{x}{\xi^{\ast}y}=\ip{x}{L_{\xi^{\ast}}y}.$$
This proves the inclusion $L_{\xi^*}\subset (L_\xi)^*$.
An analogous proof can be done for $R_{\xi}$, $\xi \in \H$.

The injectivity of the map $\xi \in \H \to L_\xi \in \LAoH$ comes from the condition (A).
\end{proof}

\subsection{Partial multiplication and bounded elements}

\bedefi Let $\xi, \eta \in \H$. We say that $\xi$ is a left-multiplier of $\eta$ (or, $\eta$ is a right-multiplier of $\xi$) 
if there exists $\zeta \in \H$ such that
$$ \ip{\eta x}{\xi^* y}= \ip{\zeta x}{y}, \quad \forall x, y \in \Ao.$$
In this case we put $\xi \wmult \eta:=\zeta$. \findefi

Clearly, $\xi \wmult \eta$ is well-defined if and only if $\eta^*\wmult \xi^*$ is well defined. Moreover $(\xi \wmult \eta)^*=\eta^*\wmult \xi^*$.

If $\xi \wmult \eta$ is well-defined then $L_\xi \mult L_\eta$ is well defined in $\LAoH$ and $L_{\xi \wmult \eta}=L_\xi \mult L_\eta$.

For $\xi \in \H$, we denote by ${\sf L}(\xi)$ (resp., ${\sf R}(\xi)$) the set of left- (resp., right-) multipliers of $\xi$.

\begin{prop} Let $(\H,\Ao)$ be a Hilbert quasi *-algebra. Then $\H$ is a partial *-algebra with respect to the multiplication $\wmult$.

\end{prop}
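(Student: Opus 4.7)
The plan is to verify the three defining axioms of a partial *-algebra for the product $\wmult$. The backbone of the whole argument is the continuity identity
$$ \ip{\zeta x}{y} = \ip{x}{\zeta^* y}, \qquad \zeta \in \H, \; x,y \in \Ao,$$
which I would obtain by taking a net $z_\alpha \in \Ao$ with $z_\alpha \to \zeta$, invoking axiom (ii) of a Hilbert algebra in the form $\ip{z_\alpha x}{y} = \ip{x}{z_\alpha^* y}$, and passing to the limit; both sides are continuous in $\zeta$ because $^\ast$ is an isometry of $\H$ and because, for fixed $y\in\Ao$, the map $\xi \mapsto \xi y$ is continuous by axiom (iii) of a normed quasi *-algebra. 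Taking complex conjugates furnishes the twin identity $\ip{x}{\zeta y} = \ip{\zeta^* x}{y}$.

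With this in hand, \emph{uniqueness} of $\xi \wmult \eta$ is immediate: if two vectors $\zeta_1, \zeta_2$ both answer the defining relation, then $\ip{(\zeta_1 - \zeta_2)x}{y}=0$ for every $x, y \in \Ao$; density of $\Ao$ in $\H$ gives $(\zeta_1 - \zeta_2)x = 0$ for every $x \in \Ao$, whence $\zeta_1 = \zeta_2$ by assumption (A). Axiom (ii) of a partial *-algebra then falls out from linearity of the defining equation in $\eta$: if $\xi\wmult\eta_1=\zeta_1$ and $\xi\wmult\eta_2=\zeta_2$, the vector $\lambda\zeta_1+\mu\zeta_2$ witnesses $\xi\wmult(\lambda\eta_1+\mu\eta_2)$, yielding simultaneously the existence clause and property (iii)(1).

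For axiom (i) and the *-property $(\xi\wmult\eta)^\ast=\eta^\ast\wmult\xi^\ast$, assume $\xi\wmult\eta=\zeta$ and start from $\ip{\eta x}{\xi^\ast y} = \ip{\zeta x}{y}$. Conjugating and swapping $x\leftrightarrow y$ produces $\ip{\xi^\ast x}{\eta y} = \ip{x}{\zeta y}$, and the twin identity of the first paragraph rewrites the right-hand side as $\ip{\zeta^\ast x}{y}$. This is precisely the defining equation for $\eta^\ast\wmult\xi^\ast=\zeta^\ast$, so $(\xi,\eta)\in\Gamma \Leftrightarrow (\eta^\ast,\xi^\ast)\in\Gamma$ and $(\xi\wmult\eta)^\ast=\eta^\ast\wmult\xi^\ast$, closing the verification.

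The only real obstacle is the opening continuity step: one must check that the Hilbert-algebra identity $\ip{xy}{z}=\ip{y}{x^\ast z}$ extends to arbitrary $\zeta\in\H$ in the first factor. Once this extension is legitimate, every remaining axiom is a one-line sesquilinear manipulation, so no further technicalities arise.
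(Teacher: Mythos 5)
Your verification is correct, and it is essentially the argument the paper has in mind: the paper states this proposition without proof, having already asserted the key facts (that $\xi\wmult\eta$ is well defined iff $\eta^*\wmult\xi^*$ is, with $(\xi\wmult\eta)^*=\eta^*\wmult\xi^*$) as "clear" in the remarks preceding it, and having established your backbone identity $\ip{\xi x}{y}=\ip{x}{\xi^*y}$ in Lemma 4.3 via $L_{\xi^*}\subset(L_\xi)^*$. Your write-up simply supplies the routine details (uniqueness via density and assumption (A), linearity in the second slot, and the involution axiom), all of which check out.
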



\berem It is clear that $\xi \wmult \eta$ is well defined if and only if $L_\xi \mult L_\eta$ is well defined in $\LAoH$ and $L_\xi \mult L_\eta= L_\zeta$ for some $\zeta \in \H$.
If $(\H, \Ao)$ has a unit $e$, the second condition is automatically satisfied whenever $L_\xi \mult L_\eta$ is well defined, since one can put $\zeta=(L_\xi \mult L_\eta)e$. In this case, $L_\H:=\{L_\xi; \, \xi \in \H\}$ is a partial O*-algebra on $\Ao$.

\enrem

\beprop \label{prop_domainadj} Let $(\H,\Ao)$ be a Hilbert quasi *-algebra and $\xi \in \H$.
The following statements hold.
\begin{itemize}
\item[(i)] ${\sf R}(\xi^*)=\{\eta \in D((L_\xi\upharpoonright_{\Ao^2})^*): \eta y \in D(L_\xi^*),\, \forall y \in \Ao \}.$
\item[(ii)]If $(\H, \Ao)$ has a unit, then ${\sf R}(\xi^*)= D(L_\xi^*)$.
\item[(iii)] If $(\H, \Ao)$ has a unit, and $\xi^*$ is a universal right multiplier ( i.e. ${\sf R}(\xi^*)=\H$), then $\overline{L}_\xi$ and $\overline{L}_{\xi^*}$ are bounded operators.
\end{itemize}
\enprop
\begin{proof} (i): Let $\eta \in {\sf R}(\xi^*)$. Then,
$$\ip{L_\xi x}{\eta y}=\ip{\xi x}{\eta y}= \ip{x}{(\xi^*\wmult \eta) y}.$$
Hence $\eta y \in D(L_\xi^*)$ and $L_\xi^* (\eta y)= (\xi^*\wmult \eta)y$, for every $y \in \Ao$.

Moreover, since
$$\ip{\xi x}{\eta y} = \ip{\xi xy^*}{\eta}= \ip{x}{(\xi^*\wmult \eta) y}=\ip{xy^*}{\xi^*\wmult \eta},$$
we also have that $\eta \in D((L_\xi\upharpoonright_{\Ao^2})^*)$.

Conversely, let $\eta \in D((L_\xi\upharpoonright_{\Ao^2})^*)$ be such that $\eta y \in D(L_\xi^*)$, for every $y \in \Ao$.
Then,
$$ \ip{L_\xi x}{\eta y}= \ip{x}{L_\xi^* (\eta y)}, \quad \forall x, y \in \Ao$$
On the other hand,
$$  \ip{L_\xi x}{\eta y}=  \ip{L_\xi xy^*}{\eta} =  \ip{ xy^*}{(L_\xi\upharpoonright_{\Ao^2})^*\eta }=\ip{ x}{((L_\xi\upharpoonright_{\Ao^2})^*\eta)y }.$$
Thus,
$$\ip{\xi x}{\eta y}=\ip{ x}{((L_\xi\upharpoonright_{\Ao^2})^*\eta)y }.$$
This implies that $\eta \in {\sf R}(\xi^*)$ and $\xi^*\wmult \eta = (L_\xi\upharpoonright_{\Ao^2})^*\eta$.\\
(ii): This follows immediately from the closed graph theorem.\\
(iii): In this case, $\Ao^2 =\Ao$. Now if $\eta \in D(L_\xi^*)$ and $y \in \Ao$, then $\eta y\in D(L_\xi^*) $. Indeed, we have
$$ \ip{L_\xi x}{\eta y}=\ip{L_\xi (xy^*)}{\eta}=\ip{ xy^*}{L_\xi^*\eta}= \ip{ x}{(L_\xi^*\eta)y}.$$
Hence, $\eta y\in D(L_\xi^*) $ and $L_\xi^*(\eta y)= (L_\xi^*\eta)y.$
\end{proof}

%
%
%

For some $\xi\in \H$ it may happen that the operator $L_\xi$ (resp. $R_\xi$) is bounded on $\Ao$. Then its closure $\overline{L}_\xi$ (resp. $\overline{R}_\xi$) is an everywhere defined bounded operator in $\H$. In this case, we say that $\xi$ is a {\em left- ({\rm resp.} right-)  bounded element}.

\medskip
The following result holds \cite[Proposition 11.7.5]{Palmer}:
\begin{prop}\label{defn_bounded}Let $(\H, \Ao)$ be a Hilbert quasi *-algebra. An element $\xi \in \H$ is left-bounded if and only if it is right bounded. Moreover, $\overline{L}_\xi x= (R_{\xi^*}x^*)^*$, for every left-bounded element $\xi\in \H$.
\end{prop}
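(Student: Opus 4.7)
The plan is to use the adjoint inclusions established in Lemma \ref{lemma_21} as a bridge between left and right boundedness, exploiting the fact that the involution on $\H$ is isometric.

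First, I would assume $\xi$ is left-bounded, so $L_\xi\colon \Ao \to \H$ extends to a bounded everywhere defined operator $\overline{L}_\xi$ on $\H$. Its Hilbert space adjoint $(\overline{L}_\xi)^* = (L_\xi)^*$ is then bounded on all of $\H$ with the same norm. By Lemma \ref{lemma_21} we have $L_{\xi^*} \subset (L_\xi)^*$, so $L_{\xi^*}$ is the restriction to $\Ao$ of a bounded operator and is therefore itself bounded, with $\|L_{\xi^*}\| \leq \|L_\xi\|$.

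Next, I would transfer this boundedness from $L_{\xi^*}$ to $R_\xi$ through the involution. For $x \in \Ao$, axiom (iii) of a quasi *-algebra (extended to $\H$ by continuity of $*$ and of the module action) gives $(x\xi)^* = \xi^* x^*$, so
\[
\|R_\xi x\| = \|x\xi\| = \|\xi^* x^*\| = \|L_{\xi^*} x^*\| \leq \|L_{\xi^*}\|\,\|x\|.
\]
Hence $R_\xi$ is bounded, with $\|R_\xi\|\le \|L_\xi\|$. The converse implication ``right-bounded $\Rightarrow$ left-bounded'' follows by the symmetric argument, based instead on the inclusion $R_{\xi^*} \subset (R_\xi)^*$ also furnished by Lemma \ref{lemma_21}.

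For the identity $\overline{L}_\xi x = (R_{\xi^*} x^*)^*$ with $x \in \Ao$, I would simply observe that $R_{\xi^*} x^* = x^* \xi^*$, so applying the involution yields $(R_{\xi^*} x^*)^* = (x^*\xi^*)^* = \xi x = L_\xi x$, which agrees with $\overline{L}_\xi x$ on $\Ao$. I do not foresee a serious obstacle: the entire argument rests on Lemma \ref{lemma_21} together with isometry of the involution. The only point requiring a small justification is the identity $(x\xi)^* = \xi^* x^*$ with $x\in\Ao$ and $\xi\in\H$, which follows from axiom (iii) of the quasi *-algebra combined with continuity of $*$ and of the module action.
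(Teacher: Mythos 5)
Your argument is correct. Note that the paper itself gives no proof of this proposition --- it simply cites \cite[Proposition 11.7.5]{Palmer} --- so what you have produced is a self-contained substitute rather than a variant of an argument in the text. Your route (left-bounded $\Rightarrow$ $(\overline{L}_\xi)^*$ bounded $\Rightarrow$ $L_{\xi^*}$ bounded via the inclusion $L_{\xi^*}\subset (L_\xi)^*$ of Lemma \ref{lemma_21} $\Rightarrow$ $R_\xi$ bounded via the isometry of the involution and $(x\xi)^*=\xi^*x^*$) is sound, and it is essentially the standard Hilbert-algebra argument; all the ingredients you invoke are indeed available in the paper, since Lemma \ref{lemma_21} supplies both adjoint inclusions and property (iii) of Definition \ref{ex_Hilbert algebras} makes $*$ isometric, so that $\|x^*\|=\|x\|$ and the identity $(x\xi)^*=\xi^*x^*$ passes to the limit. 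Two small points are worth making explicit. First, your chain actually shows the slightly stronger fact that $\xi$ left-bounded implies $\xi^*$ left-bounded and $\|R_\xi\|\le\|L_{\xi^*}\|\le\|L_\xi\|$; combined with the symmetric estimate this gives $\|L_\xi\|=\|R_\xi\|$, which the paper uses later to justify that $\|\cdot\|_b$ is unambiguous, so it is worth recording. Second, you verify $\overline{L}_\xi x=(R_{\xi^*}x^*)^*$ only for $x\in\Ao$; if the identity is meant for arbitrary $x\in\H$ (reading $R_{\xi^*}$ as its bounded closure $\overline{R}_{\xi^*}$), one should add the one-line remark that both sides are bounded maps of $x$ agreeing on the dense subspace $\Ao$, using again that $*$ is isometric, hence continuous.
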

From now on, we speak only of {\em bounded elements}. The set of bounded elements is denoted by $\H_b$.

 Let $\xi,\eta \in\H_b$. Then the multiplication $\xi \wmult \eta$ is, in this case, well-defined.
Indeed, for every $x,y \in \H$, we have
\begin{align*}\ip{\eta x}{\xi^* y}&= \ip{L_\eta x}{\xi^* y}=\ip{ x}{(L_\eta)^*(\xi^* y)}=\ip{ x}{\overline{L}_{\eta^*}(L_{\xi^*} y)} \\&= \ip{ x}{(\overline{L}_{\eta^*}\overline{L}_{\xi^*}) y)}=\ip{(\overline{L}_\xi\eta) x}{ y)} .\end{align*}
Hence, $\xi \wmult \eta=\overline{L}_\xi\eta$.
Thus $\H_{b}$ is a *-algebra containing $\Ao$. It is natural to define a norm in $\H_{b}$ by
$ \|\xi\|_{b} = \|L_\xi\|$ where the latter denotes the operator norm of ${\mc B}(\H)$. We notice that there is no ambiguity in this choice, because, as it is easy to check, $\|L_\xi\|=\|R_\xi\|$, for every bounded element $\xi$.

\begin{prop} The space $\H_{b}$ is a pre C*-algebra. If, in addition, $(\H,\Ao)$ has a unit $e$, then $\H_{b}[\|\cdot\|_{b}]$ is complete and, therefore, it is a C*-algebra.
\end{prop}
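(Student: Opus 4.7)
The plan is to transfer the C*-structure from $\mc B(\H)$ to $\H_b$ through the map $\xi\mapsto\overline{L}_\xi$, which is isometric by the very definition of $\|\cdot\|_b$. Since the paragraphs preceding the statement have already shown that $\H_b$ is a $*$-subalgebra containing $\Ao$ with multiplication $\xi\wmult\eta=\overline{L}_\xi\eta$, what remains is to verify that $\|\cdot\|_b$ is a norm, the submultiplicativity $\|\xi\wmult\eta\|_b\leq\|\xi\|_b\|\eta\|_b$, and the C*-identity $\|\xi^*\wmult\xi\|_b=\|\xi\|_b^2$, and then to treat completeness under the existence of a unit.

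For the norm axioms, I would observe that $\|\xi\|_b=0\Rightarrow\xi=0$ follows from the non-degeneracy assumption (A): $L_\xi=0$ forces $\xi x=0$ for every $x\in\Ao$, hence $\xi=0$. Homogeneity and the triangle inequality descend directly from the operator norm on $\mc B(\H)$, while submultiplicativity is immediate from the identity $\overline{L}_{\xi\wmult\eta}=\overline{L}_\xi\,\overline{L}_\eta$ (established just above the statement). The crucial ingredient for the C*-identity is the equality $(\overline{L}_\xi)^*=\overline{L}_{\xi^*}$ for $\xi\in\H_b$. From Lemma \ref{lemma_21} I have the inclusion $L_{\xi^*}\subset(L_\xi)^*=(\overline{L}_\xi)^*$, so it is enough to check that both sides are bounded and everywhere defined. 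Axiom (iii) of Definition \ref{ex_Hilbert algebras} makes the involution isometric on $\Ao$ (hence on $\H$), and together with Proposition \ref{defn_bounded} this gives $\xi\in\H_b\Leftrightarrow\xi^*\in\H_b$. Thus $\overline{L}_{\xi^*}$ is a closed, everywhere-defined bounded operator, and as such coincides with the closed, everywhere-defined $(\overline{L}_\xi)^*$. Consequently $\overline{L}_{\xi^*\wmult\xi}=(\overline{L}_\xi)^*\overline{L}_\xi$, and the C*-identity in $\mc B(\H)$ yields $\|\xi^*\wmult\xi\|_b=\|\overline{L}_\xi\|^2=\|\xi\|_b^2$, showing that $\H_b$ is a pre C*-algebra.

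For completeness assuming a unit $e\in\Ao$, let $\{\xi_n\}$ be Cauchy in $\H_b[\|\cdot\|_b]$. Then $\{\overline{L}_{\xi_n}\}$ is Cauchy in $\mc B(\H)$ and converges in operator norm to some $T\in\mc B(\H)$. Evaluating at $e$ produces a candidate limit $\xi:=Te\in\H$, and I would prove $\xi\in\H_b$ with $\overline{L}_\xi=T$ as follows: for each $x\in\Ao$, the continuity of the right multiplication $R_x:\H\to\H$ forces $\xi_n x\to\xi x=L_\xi x$, while operator-norm convergence gives $\xi_n x=\overline{L}_{\xi_n}x\to Tx$. Hence $L_\xi$ coincides with $T\upharpoonright_{\Ao}$ and is bounded, so $\xi\in\H_b$, and by density of $\Ao$ in $\H$ the closure $\overline{L}_\xi$ equals $T$. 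Therefore $\|\xi_n-\xi\|_b=\|\overline{L}_{\xi_n}-T\|\to 0$, which is the required completeness.

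The main obstacle is the identification $(\overline{L}_\xi)^*=\overline{L}_{\xi^*}$: Lemma \ref{lemma_21} only gives an inclusion, and promoting it to equality rests on the stability of $\H_b$ under the involution, which is ultimately a consequence of the isometric character of the Hilbert-algebra involution. A secondary subtlety is that the completeness argument uses the unit in an essential way: it is precisely the unit that permits one to extract the candidate vector $\xi=Te$ from the operator-norm limit $T$; in its absence, recovering $\xi$ from $T$ requires a different device and the conclusion need not hold without further assumptions.
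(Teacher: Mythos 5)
Your proof is correct, and since the paper states this proposition without proof (it is essentially \cite[Section 11.7]{Palmer}), your argument supplies exactly the standard route the authors are implicitly relying on: transfer the C*-structure along the isometry $\xi\mapsto\overline{L}_\xi$ into ${\mc B}(\H)$. All the key points check out: definiteness of $\|\cdot\|_b$ from assumption (A); submultiplicativity from $\overline{L}_{\xi\wmult\eta}=\overline{L}_\xi\,\overline{L}_\eta$; and the C*-identity from the promotion of the inclusion $L_{\xi^*}\subset(L_\xi)^*$ of Lemma \ref{lemma_21} to the equality $\overline{L}_{\xi^*}=(\overline{L}_\xi)^*$, which you correctly justify by noting that both sides are bounded, everywhere defined, and one contains the closure of the other (the stability $\xi\in\H_b\Leftrightarrow\xi^*\in\H_b$ following from the isometry of the involution together with Proposition \ref{defn_bounded}). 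In the completeness argument there is one step you use but do not state: to invoke continuity of $R_x$ and conclude $\xi_n x\to\xi x$, you need $\xi_n\to\xi=Te$ in the norm of $\H$; this is immediate from $\xi_n=\overline{L}_{\xi_n}e\to Te$ in $\H$, so the gap is cosmetic. Your closing remark that the unit is what lets one extract the candidate vector from the operator-norm limit is apt and is consistent with the paper's subsequent example ($C_c(\mathbb{R})$ in $L^2(\mathbb{R})$) showing that $\H_b$ can fail to be complete in the non-unital case.
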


From (iii) of Proposition \ref{prop_domainadj} we get
\begin{prop} If $(\H,\Ao)$ has a unit $e$, then the space $\H_b$ of bounded elements coincides with the set ${\sf R}\H(={\sf L}\H)$ of the universal multipliers of $\H$.
\end{prop}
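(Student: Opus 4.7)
The inclusion ${\sf R}\H \subseteq \H_b$ is essentially already done: if $\xi$ is a universal right multiplier, then ${\sf R}((\xi^*)^*) = \H$, and Proposition \ref{prop_domainadj}(iii) (applied with $\xi^*$ in place of $\xi$) immediately gives that $\overline{L}_{\xi^*}$, and hence $\overline{L}_\xi$, is bounded. So $\xi \in \H_b$.

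For the reverse inclusion $\H_b \subseteq {\sf R}\H$, let $\xi \in \H_b$. Since $\H_b$ is stable under involution, both $\overline{L}_\xi$ and $\overline{L}_{\xi^*}$ lie in ${\mathcal B}(\H)$. Given an arbitrary $\eta \in \H$, the natural candidate for $\xi \wmult \eta$ is $\zeta := \overline{L}_\xi \eta$, and the goal is to verify the defining identity
\[
\ip{\eta x}{\xi^* y} = \ip{\zeta x}{y}, \qquad x, y \in \Ao.
\]
The plan is to rewrite the left-hand side using the adjoint relation $\overline{L}_\xi^{\,*} = \overline{L}_{\xi^*}$ (which follows from $L_{\xi^*} \subset L_\xi^*$ given by Lemma \ref{lemma_21}, together with density of $\Ao$ and the boundedness of $\overline{L}_\xi$, $\overline{L}_{\xi^*}$) and then to push the factor of $x$ across $\overline{L}_\xi$.

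The key intermediate fact is the right-module compatibility
\[
\overline{L}_\xi(\eta x) = (\overline{L}_\xi \eta)\, x, \qquad \eta \in \H,\; x \in \Ao.
\]
For $\eta \in \Ao$ this is just associativity in $\Ao$; to extend it to all $\eta \in \H$ one first notes, by approximating $\xi \in \H$ from $\Ao$ and using continuity of $R_x$, that $\xi(yx) = (\xi y)x$ for every $\xi \in \H$ and $x,y \in \Ao$, and then approximates $\eta$ by elements of $\Ao$, exploiting the continuity of $\overline{L}_\xi$ (since $\xi$ is bounded) together with continuity of $R_x$ on $\H$. Once this is in hand,
\[
\ip{\eta x}{\xi^* y} = \ip{\eta x}{\overline{L}_{\xi^*}\,y} = \ip{\overline{L}_\xi(\eta x)}{y} = \ip{(\overline{L}_\xi\eta)x}{y},
\]
so $\xi \wmult \eta$ is well defined and equals $\overline{L}_\xi \eta$. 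Hence ${\sf R}(\xi) = \H$, i.e.\ $\xi \in {\sf R}\H$.

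Finally, the equality ${\sf R}\H = {\sf L}\H$ is a consequence of the symmetry of the partial multiplication under the involution: $\eta \wmult \xi$ is defined precisely when $\xi^* \wmult \eta^*$ is defined, so $\xi \in {\sf L}\H$ iff $\xi^* \in {\sf R}\H = \H_b$, iff $\xi \in \H_b = {\sf R}\H$ (using that $\H_b$ is $^*$-invariant). The only mildly delicate point in the whole argument is the right-module identity $\overline{L}_\xi(\eta x) = (\overline{L}_\xi \eta)x$; every other step is either a direct appeal to the results already established (notably Proposition \ref{prop_domainadj}(iii) and Lemma \ref{lemma_21}) or a standard density/continuity passage.
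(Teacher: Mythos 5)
Your proof is correct. The inclusion ${\sf R}\H\subseteq\H_b$ is exactly the paper's argument: the proposition is stated as an immediate consequence of Proposition \ref{prop_domainadj}(iii), which is what you invoke. For the reverse inclusion you take a more hands-on route than the paper needs to: the statement $\H_b\subseteq{\sf R}\H$ drops out in one line from Proposition \ref{prop_domainadj}(ii), since for $\xi\in\H_b$ the operator $\overline{L}_\xi$ is bounded, so $D(L_\xi^*)=\H$ and hence ${\sf R}(\xi^*)=D(L_\xi^*)=\H$; combined with the $^*$-invariance of $\H_b$ this already gives universality. Your direct verification --- identifying the candidate $\zeta=\overline{L}_\xi\eta$, proving $\overline{L}_\xi^{\,*}=\overline{L}_{\xi^*}$ from Lemma \ref{lemma_21} plus boundedness, and establishing the module identity $\overline{L}_\xi(\eta x)=(\overline{L}_\xi\eta)x$ by first using the associativity axiom $a(xy)=(ax)y$ of the quasi *-algebra and then a density/continuity passage --- is valid and has the added merit of exhibiting explicitly that $\xi\wmult\eta=\overline{L}_\xi\eta$ for \emph{every} $\eta\in\H$ (the paper only records this formula when both factors are bounded), at the cost of being longer. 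One small observation: your argument for $\H_b\subseteq{\sf R}\H$ does not actually use the unit; the unit is only needed for the opposite inclusion via part (iii).
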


\berem If $(\H,\Ao)$ has no unit, then $\H_{b}[\|\cdot\|_{b}]$ need not be complete. As an example take $\Ao =C_c({\mb R})$ the algebra of continuous functions with compact support with the $L^2-$ inner product. Then $\H=L^2({\mb R}, d\lambda)$, where $\lambda$ indicates the Lebesgue measure on the real line. In this case $\H_{b}= L^\infty({\mb R}, d\lambda) \cap L^2({\mb R}, d\lambda)$ which is not complete in the $L^\infty$-norm. 
\enrem

\berem The notion of bounded element of a Banach quasi *-algebra $(\A,\Ao)$ was first given in \cite{ct1, ct2}. It is coincides exactly with that given here. Bounded elements were also generalized to different situations in \cite{Ant3, bell_ct}. 
\enrem
\subsection{Positive elements and representable functionals} As done in Section \ref{Sec2}, we consider the set $\H^+=\overline{\Ao^+}^{\tau_n}$ of positive elements of $\H$, where $\Ao^+:=\left\{\sum_{k=1}^nx_k^{\ast}x_k:x_k\in\Ao,n\in\mathbb{N}\right\}$.

\bedefi\label{pos} Let $(\H,\Ao)$ be a Hilbert quasi *-algebra and $\xi\in \H$.  We say that $\eta$ is {\em w-positive} if $L_\eta$ (or, equivalently $R_\xi$)  is a positive operator; i.e., if
$$\ip{L_\xi x}{x}=\ip{\xi x}{x}\geq 0, \forall x \in \Ao.$$ \findefi

If $\xi$ is w-positive, then $\xi=\xi^*$. Moreover $L_\xi$ is positive, if and only if, $R_\xi$ is positive.

The latter statement is due to the equalities
$$ \ip{L_\xi x}{x} = \ip{\xi x}{x}= \ip{x^*}{x^*\xi^*}\geq 0  \mbox{ if } L_\xi \geq 0 \mbox{ and } x \in \Ao.$$

We put
$$\H^+_w=\{ \xi, \in \H:\, \ip{\xi x}{x}\geq 0, \forall x \in \Ao \}= \{ \xi, \in \H:\, \xi \mbox { is w-positive}\}.$$

The wedge $\H^+_w$ defines in standard way  a partial order in the real space $\H_h= \{\xi \in \H: \xi=\xi^*\}$: if $\xi, \eta \in \H_h$ we write $\xi \leq \eta$ when $\eta -\xi \in \H^+_w$.
The set $\H^+_w$ is actually a cone. Indeed, it is easy to prove that
$\H^+_w \cap (-\H^+_w)= \{0\}$.

\medskip
The notion of w-positive element plays a role in the description of representable and continuous functionals on $(\H, \Ao)$.


To begin with, we describe the general form of an element $\omega\in \hcrep$.
Let $\omega \in \hcrep$.
 Since $\omega$ is bounded there exists a vector $\eta \in \H$ such that
\begin{equation} \label{eqn_omega}\omega(\xi)= \ip{\xi}{\eta}, \quad \forall \xi \in \H.\end{equation}

The condition (L1) implies that $\eta$ is w-positive and hence the operator $R_\eta$ is positive (but not necessarily self-adjoint). Indeed,
$$ 0\leq \omega(x^*x) = \ip{x^*x}{\eta} = \ip{x}{x\eta}= \ip{x}{R_\eta x},\quad \forall x\in \Ao.$$
This in turn implies that $\eta =\eta^*$.

As for condition  (L2), we have, for every $\xi\in \H$ and $x, y \in \Ao$,
$$ \omega(y^* \xi x)= \ip{y^* \xi x}{\eta} = \ip{\eta^*}{x^* \xi^* y}= \ip{\eta}{x^* \xi^* y}= \overline{\ip{x^* \xi^* y}{\eta}}= \overline{\omega(x^*\xi^* y)}$$

\medskip
The condition (L3) in the situation we are examining reads as follows
$$ \forall \xi \in \H, \, \exists \gamma_\xi>0:\, |\ip{\xi^* x}{\eta}|\leq \gamma_\xi \ip{x^*x}{\eta}^{1/2}, \; \forall x \in \Ao.$$
{ From proposition \ref{prop1}, $\varphi_{\omega}(x,x)=\ip{x^{\ast}x}{\eta}$ has an everywhere defined closure $\overline{\varphi}_\omega$ in $\Hil$, hence for every $\xi\in\H$ there exists $c_{\xi,\eta}>0$ such that
$$  |\ip{\xi^* x}{\eta}|\leq c_{\xi,\eta} \|x\|, \quad \forall x\in \Ao.$$}
This implies that there exists a vector $\eta' \in \H$ such that
$$ \ip{\xi^* x}{\eta} = \ip{x}{\eta'}, \quad \forall x\in \Ao.$$
Hence, for every $x, y \in \Ao$ we get
$$ \ip{\xi^* x}{\eta y}= \ip{\xi^* x y^*}{\eta}= \ip{x y^*}{\eta'}=\ip{x}{\eta' y}.$$
Therefore, $\xi\wmult \eta$ is well defined for every $\xi \in \H$. Thus ${\sf L}(\eta)=\H$ and, since $\eta=\eta^*$, ${\sf R}(\eta)=\H$.
Taking into account Lemma \ref{lemma_21}, both $L_\eta$ and $R_\eta$ are bounded operators on $\H$.

In conclusion we have the following
\beprop \label{prop_repfun}  Let $(\H,\Ao)$ be a Hilbert quasi *-algebra. Then, $\omega \in \hcrep$ if, and only if, there exists a unique w-positive bounded element $\eta\in \H$ such that
\begin{equation}\label{eqn_repfun} \omega(\xi)=\ip{\xi}{\eta}, \quad \forall \xi \in \H.\end{equation}
\enprop
\begin{proof}The necessity has been proved above. For the sufficiency, (L1) and (L2) can be proven easily.
As for (L3), denoting by $\overline{R}_\eta$ the continuous extension of $R_\eta$ to $\H$, we have, for every $\xi \in \H$,
\begin{align*}|\omega(\xi^* x)| &= |\ip{\xi^* x}{\eta}|= |\ip{ x}{\overline{R}_\eta\xi}|\\
&\leq \ip{ x}{R_\eta x}^{1/2} \ip{ \xi}{\overline{R}_\eta\xi}^{1/2}= \omega(x^*x)^{1/2} \ip{ \xi}{\overline{R}_\eta\xi}^{1/2}, \quad \forall x \in \Ao \end{align*}
due to the generalized Cauchy-Schwarz inequality for positive operators. Thus (L3) is fulfilled. \end{proof}
{\berem Let $(\H,\Ao)$ be a Hilbert quasi *-algebra. We already know from Proposition \ref{prop_charuniformly_rep} that, $\hcrep=\hcurep$.
Here we want to point out that, in this case, this result can be obtained by a direct estimation.

Indeed, if $\omega\in \hcrep$, then, by Proposition \ref{prop_repfun}, there exists $\eta\in \H_b$, $\eta$ w-positive, such that \eqref{eqn_repfun} holds. Then, if $\xi\in \H$ and $x \in \Ao$, we have
\begin{align*} |\omega(\xi^*x)|&=|\ip{\xi^*x}{\eta}|=|\ip{\xi^*}{\overline{L}_\eta x^*}|\\
&= |\ip{\overline{L}_\eta^{1/2} \xi^*}{\overline{L}_\eta^{1/2} x^*}|\leq \|\overline{L}_\eta^{1/2} \xi^*\| \|\overline{L}_\eta^{1/2} x^*\|\\
& =\ip{\xi^*}{\overline{L}_\eta\xi^*} \ip{x^*}{\overline{L}_\eta x^*}\leq \|\overline{L}_\eta^{1/2}\|\,\|\xi\|\omega(x^*x)^{1/2}.
\end{align*}
Hence, $\omega$ is uniformly representable.
\enrem
}
{
After describing the set $\crep$, a second natural step consists in looking for conditions on $\omega \in { \mc R} (\H, \Ao)$ to be continuous on $\H$, at least in some particular situations.

Before proceeding, we remind that a {\it critical eigenvalue} of a couple of operators $A,B$ in a Banach space $X$ is a complex number $\mu$ such that $(A-\mu I)X$ has infinite codimension and $\mu$ is an eigenvalue of $B$ (see \cite[Def 3.1]{Sin}.

\begin{prop}\label{comm_HQA} Let $(\H,\Ao)$ be a commutative Hilbert quasi *-algebra with unit $e$.
Assume that $\Ao$ is a Banach *-algebra with respect to $\|\cdot\|_0$ and that there exists an element $x$ of $\Ao$ such that the spectrum $\sigma(\overline{R}_x)$ of the bounded operator $\overline{R}_x$ of right multiplication by $x$ consists only of its continuous part $\sigma_c(\overline{R}_x)$.
If $\omega\in \rep$,
then $\omega$ is bounded (and uniformly representable).
\end{prop}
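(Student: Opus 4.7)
The plan is to establish the continuity of $\omega$ in three stages: reduce boundedness of $\omega$ to closability of an auxiliary operator; use the Banach *-algebra structure on $\Ao$ to put bounded operators and a bounded intertwining in place; and finally invoke the absence of point spectrum of $\overline{R}_x$ via a Singer-type automatic continuity argument.

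First, I would translate the claim into closability. By Proposition \ref{prop_charuniformly_rep}, $\omega$ is bounded if and only if the everywhere-defined operator $T_\omega\colon\H\to\H_\omega$ of Proposition \ref{prop_operatorT} is bounded; since $T_\omega$ has full domain, the closed graph theorem lets us replace boundedness by closability, so it suffices to prove that whenever $\xi_n\to 0$ in $\H$ and $T_\omega\xi_n\to\eta$ in $\H_\omega$, one has $\eta=0$. To bring bounded operators on $\H_\omega$ into play, observe that $\omega|_{\Ao}$ is a positive linear functional (via (L.1)) on the unital Banach *-algebra $\Ao[\|\cdot\|_0]$, hence continuous in $\|\cdot\|_0$ by the classical automatic continuity theorem for positive functionals on Banach *-algebras with identity (see, e.g., \cite[Section 11.1]{Palmer}). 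Equivalently, the GNS *-representation $\pi_\omega$ sends $\Ao$ into bounded operators on $\H_\omega$; in particular $\pi_\omega(x)$ is bounded and, since $\Ao$ is commutative and $\pi_\omega$ is a *-homomorphism, normal.

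Second, a short direct calculation based on the defining identity $\ip{\lambda_\omega(z)}{T_\omega a}=\omega(a^*z)$ and on the equality $\xi x=x\xi$ (valid for $\xi\in\H$ and $x\in\Ao$ in a commutative Hilbert quasi *-algebra, by continuity of $L_x$, $R_x$ and density of $\Ao$ in $\H$) yields the intertwining relation
\begin{equation*}
T_\omega\,\overline{R}_x \;=\; \pi_\omega(x)\,T_\omega\qquad\text{on all of }\H.
\end{equation*}
The operator $\overline{R}_x$ is itself normal ($R_{x^*}=R_x^*$ by axiom (iii) in the definition of a Hilbert algebra), and the hypothesis $\sigma(\overline{R}_x)=\sigma_c(\overline{R}_x)$ says that $\overline{R}_x$ has no point spectrum. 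Consequently, no complex number $\mu$ can be a critical eigenvalue of the couple $(\pi_\omega(x),\overline{R}_x)$ in the sense of \cite[Def.\,3.1]{Sin}, and the corresponding Singer-type automatic continuity result applied to the intertwiner $T_\omega$ forces its closability, finishing the proof.

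The main obstacle is this last stage: checking that the absence of point spectrum of $\overline{R}_x$ fits precisely into the hypotheses of the Singer-type automatic continuity statement so as to yield closability of $T_\omega$ along the intertwiner. The first two stages are essentially routine reductions built on the results of Sections \ref{Sec2}--\ref{Sec3} and on the classical theory of positive functionals on Banach *-algebras.
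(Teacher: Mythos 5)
Your overall strategy is the same as the paper's: reduce to the operator $T_\omega$ of Proposition \ref{prop_operatorT}, use the Banach *-algebra structure of $\Ao[\|\cdot\|_0]$ to make the multiplication operators on $\H_\omega$ bounded and normal, derive the intertwining relation $T_\omega\,\overline{R}_x=\pi_\omega(x)\,T_\omega$ (the paper writes the right-hand operator as $R_{\lambda_\omega(x)}$, which coincides with $\pi_\omega(x)$ on $\H_\omega$ by commutativity), and invoke Sinclair's automatic continuity theorem for intertwiners of normal operators. Your first two stages are sound and essentially identical to the paper's.

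The gap is precisely at the stage you flagged as the main obstacle, and it is a real one: you have applied the critical-eigenvalue criterion to the wrong member of the couple. With the intertwining $T_\omega\,\overline{R}_x=\pi_\omega(x)\,T_\omega$, the couple is $(\overline{R}_x,\pi_\omega(x))$ with $\overline{R}_x$ acting on the domain space $\H$ and $\pi_\omega(x)$ on the codomain $\H_\omega$; by the definition recalled before the proposition (from \cite{Sin}), a critical eigenvalue $\mu$ is one for which $(\overline{R}_x-\mu I)\H$ has infinite codimension \emph{and} $\mu$ is an eigenvalue of the \emph{codomain-side} operator $\pi_\omega(x)$. You instead write the couple as $(\pi_\omega(x),\overline{R}_x)$ and rule out critical eigenvalues by observing that $\overline{R}_x$ has no point spectrum — i.e., you are using the eigenvalue clause on the domain-side operator. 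The hypotheses give no control whatsoever over the point spectrum of $\pi_\omega(x)$ on $\H_\omega$, so the eigenvalue clause is not the one you can kill. The correct (and easy) repair is the one the paper uses: since $\overline{R}_x$ is normal and $\sigma(\overline{R}_x)=\sigma_c(\overline{R}_x)$, the range of $\overline{R}_x-\mu I$ is either all of $\H$ or dense in $\H$ for every $\mu\in\mathbb{C}$, so the codimension clause of the definition can never be satisfied and no critical eigenvalue exists. Normality of $\overline{R}_x$, which you do record, is exactly what converts ``no eigenvalues'' into ``dense range of $\overline{R}_x-\mu I$ for all $\mu$''; once the clause is attached to the right operator, your argument closes and agrees with the paper's.
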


\begin{proof}
We maintain the notations of Proposition \ref{prop_operatorT} and discussion around it.
Then, if we put
$$\lambda_\omega(x)\cdot \lambda_\omega(y) = \lambda_\omega(xy),$$ it is easily seen that
$\cdot$ is a well-defined multiplication on $\lambda_\omega (\Ao)$ which makes it into a commutative *-algebra.
Moreover, if $\Ao$ is a Banach *-algebra with respect to $\|\cdot\|_0$, we have
\begin{equation}\label{eqn_boundR} \|\lambda_\omega(xy)\|^2\leq  \|y\|_0 \|\lambda_\omega(x)\|^2.\end{equation}
The inequality \eqref{eqn_boundR} implies that the right multiplication operator $R_{\lambda_\omega(y)}$ by $\lambda_\omega(y)$ is bounded on $\lambda_\omega(\Ao)$ and therefore it has a unique bounded extension to $\H_\omega$ denoted by the same symbol.
Taking all that into account, one easily checks that
\begin{align}
&T_\omega(az)= T_\omega a\cdot \lambda_\omega(z), \; \forall a \in \A, z\in \Ao;\label{eqn_first}\\
&T_\omega z= \lambda_\omega(z),\; \forall  z\in \Ao.
\end{align}
In particular \eqref{eqn_first} reads as follows
\begin{equation}\label{eqn_intertwining} T_\omega(R_za)=R_{\lambda_\omega(z)}T_\omega a , \; \forall a \in \A, z\in \Ao;\end{equation}
i.e., $T_\omega$ intertwines the couple $(R_z, R_{\lambda_\omega(z)})$, for every $z \in \Ao$.
Then the continuity of $T_\omega$ can be deduced from \cite[Corollary 5.7]{Sin}: if there exists $z \in \Ao$ such that
$R_z, R_{\lambda_\omega(z)}$ are normal (which is obvious here) and have no critical eigenvalues, then $T_\omega$ is continuous.

By the assumption, there exists an element $x\in \Ao$ such that $\sigma(\overline{R}_x)=\sigma_c(\overline{R}_x)$. Hence, for every $\mu \in {\mb C}$, the range of the operator $R_x -\mu I$ is either $\H$ itself or a dense subspace of $\H$. Thus no critical eigenvalue for the couple $(R_x, R_{\lambda_\omega(z)})$ may exists.
\end{proof}}

Proposition \ref{prop_repfun} allows to compare
the two sets of elements $\H^+$ and $\H^+_w$ related to the notion of positivity.

\begin{lemma} Let $(\H,\Ao)$ be a Hilbert quasi *-algebra. Then, $\H^+ \subseteq \H^+_w$.
\end{lemma}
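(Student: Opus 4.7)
The plan is to first verify $\Ao^+ \subseteq \H_w^+$ by a direct computation, and then observe that $\H_w^+$ is closed in the norm topology of $\H$; the inclusion $\H^+ \subseteq \H_w^+$ then follows by taking norm closures.

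For the first step, let $y=\sum_{k=1}^n x_k^\ast x_k$ be a generic element of $\Ao^+$ and let $x\in\Ao$. Using property (ii) of Definition \ref{ex_Hilbert algebras} (namely $\ip{ab}{c}=\ip{b}{a^\ast c}$), I compute
$$\ip{yx}{x}=\sum_{k=1}^n\ip{x_k^\ast x_k x}{x}=\sum_{k=1}^n\ip{x_k x}{x_k x}=\sum_{k=1}^n\|x_k x\|^2\geq 0.$$
Hence $y\in\H_w^+$, which proves $\Ao^+\subseteq\H_w^+$.

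For the second step, I want to show that $\H_w^+$ is $\tau_n$-closed. Let $\{\xi_n\}\subset \H_w^+$ and $\xi\in\H$ with $\|\xi_n-\xi\|\to 0$. Fix $x\in\Ao$. Since $(\H,\Ao)$ is a normed quasi *-algebra, the right multiplication $R_x:\H\to\H$ is continuous, so $\xi_n x\to \xi x$ in $\H$. By continuity of the inner product we get $\ip{\xi_n x}{x}\to\ip{\xi x}{x}$, and passing to the limit in $\ip{\xi_n x}{x}\geq 0$ yields $\ip{\xi x}{x}\geq 0$. Since $x\in\Ao$ was arbitrary, $\xi\in\H_w^+$.

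Combining the two steps, $\H_w^+$ is a $\tau_n$-closed set containing $\Ao^+$, hence it contains $\overline{\Ao^+}^{\tau_n}=\H^+$. I do not anticipate any real obstacle here; the only point requiring care is the proper use of Hilbert algebra axiom (ii) in the first computation, and the appeal to the module continuity built into the definition of a normed quasi *-algebra for the closedness of $\H_w^+$.
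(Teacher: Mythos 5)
Your proof is correct and is essentially the same as the paper's: the paper carries out the two steps in one computation, writing $\ip{\eta y}{y}=\lim_n\ip{x_n y}{y}$ for an approximating sequence $\{x_n\}\subset\Ao^+$ (which is exactly your closedness argument via continuity of $R_y$ and of the inner product) and evaluating each term as $\sum_k\|z_{nk}y\|^2\ge 0$ via the Hilbert algebra axiom, just as you do. Your reorganization into \textquotedblleft$\Ao^+\subseteq\H^+_w$ plus $\H^+_w$ is $\tau_n$-closed\textquotedblright{} is a clean presentation of the same idea.
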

\begin{proof} Let $\eta\in \H^+$; i.e. there exists a sequence $\{x_n\}$ of elements of $\Ao^+$ (thus each $x_n$ has the form $x_n = \sum_{k=1}^{N} z_{nk}^*z_{nk}$, $z_{nk}\in \Ao$) $\tau_n-$converging to $\eta$.
 Then,
\begin{align*} \ip{L_\eta y}{y}&= \ip{\eta y}{y}= \lim_{n \to \infty}\ip{x_n y}{y} \\ &= \lim_{n \to \infty}\ip{\sum_{k=1}^{N} z_{nk}^*z_{nk} y}{y}=\lim_{n \to \infty}\sum_{k=1}^{N}\ip{z_{nk}y}{z_{nk}y}\geq 0.\end{align*}
\end{proof}
\beprop Let $(\H,\Ao)$ be a Hilbert quasi *-algebra.
If the condition (P) holds and $\H_{wb}^+:=\{\xi \in \H_b:\, \ip{\xi x}{x}\geq 0,\, \forall x\in \Ao\}\subseteq \H^+$, then $\H^+ =\H^+_w$. \end{prop}
\begin{proof}

Suppose now that the condition (P) is valid and assume that $\eta \in \H^+_w$. Then $\ip{\eta y}{y}\geq 0$ for every $y \in \Ao$. This implies that $\eta\in\H$ is w-positive if, and only if, $\langle\eta,h\rangle\geq0$ for every $h\in\Ao^+$, so for continuity, $\langle\eta,\xi\rangle\geq0$ for every $\xi\in\H^+$.

Let $\omega \in { \mc R}_c (\H, \Ao)$. By Proposition \ref{prop_repfun}, there exists $\chi_{\omega}\in\H$, with $\chi_{\omega}\in\H$ bounded and w-positive, such that
 $ \omega(\xi)=\ip{\xi}{\chi_\omega}$, for every $\xi \in \H$. By the assumption $\chi_\omega = \displaystyle\lim_{n\to \infty} z_n$ for certain $z_n$'s in $\Ao^+$. Hence, for what we have just observed,
 $$\omega(\eta )=\ip{\eta }{\chi_\omega}=\lim_{n\to \infty}\ip{\eta}{z_n}\geq0.$$
Since $\omega \in { \mc R}_c (\H, \Ao)$, then  $\omega_x \in { \mc R}_c (\H, \Ao)$, where $\omega_x(\zeta)= \omega(x^*\zeta x),\; \zeta \in \H,\, x\in \Ao$. By condition (P) it follows that $\eta \in \H^+$.\end{proof}

\subsection{Integrable Hilbert quasi *-algebras}

Let $(\H,\Ao)$ be a Hilbert quasi *-algebra.
{As we have seen, in general, if $\xi \in \H$ and $\xi$ is not bounded, we have $\overline{L_\xi}\subseteq L_{\xi^*}^*$, but we do not know if the equality holds for every $\xi\in \H$.
For this reason we introduce the following definition.

\bedefi \label{defn_integrable} A Hilbert quasi *-algebra $(\H, \Ao)$ is called {\em integrable} if $\Ao$ is a core for  $L_\xi^*$, for every $\xi \in \H$.
\findefi

It is clear that, if $(\H, \Ao)$ is integrable, then, for every $\xi \in \H$, $\overline{L_\xi}= L_{\xi^*}^*$, since $L_{\xi^*}\subseteq L_\xi^*$. In particular, if $\xi=\xi^*$, then $L_\xi$ is essentially self-adjoint.
}


\bedefi \label{defn_module} Let $(\H,\Ao)$ be a Hilbert quasi *-algebra with unit $e$. We say that $(\H,\Ao)$ admits a {\em module function} if
there exists a map $\mu:\xi\in \H \to \mu(\xi)\in \H$, with the following properties:
\begin{itemize}
\item[(i)]$\mu(\xi)\in \H^+_w$, for every $\xi\in \H$;
\item[(ii)]$\mu(\xi)=\xi£$, for every $\H^+_w$;
\item[(iii)]$\|\mu(\xi)\|=\|\xi\|$, for every $\xi\in \H$.
\end{itemize}
\findefi

\begin{prop} Let $(\H,\Ao)$ be an integrable Hilbert quasi *-algebra with unit $e$. Assume, that for every $\xi \in \H$, the operator $H_{(\xi)}$ defined by $H_{(\xi)}= (L_\xi^* \overline{L}_\xi)^{1/2}$ has the following property:
\begin{equation}\label{eqn_47}H_{(\xi)}(xy)=(H_{(\xi)}x)y \quad \forall x,y \in \Ao.\end{equation}
Then $(\H,\Ao)$ admits a module function.
\end{prop}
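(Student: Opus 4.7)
The plan is to define the module function by
\[
\mu(\xi):=H_{(\xi)}\,e,\qquad \xi\in\H,
\]
and check the three axioms of Definition \ref{defn_module} one by one. First I need this to be well-defined. Since $(\H,\Ao)$ is fully closable (Lemma \ref{lemma_21}), $\overline{L}_\xi$ is a closed densely defined operator whose adjoint is $(\overline{L}_\xi)^{\ast}=L_\xi^{\ast}$. Hence $L_\xi^{\ast}\overline{L}_\xi$ is positive and self-adjoint, and its positive square root $H_{(\xi)}$ is positive self-adjoint with $D(H_{(\xi)})=D(\overline{L}_\xi)\supseteq\Ao$ and satisfies the standard identity $\|H_{(\xi)}\eta\|=\|\overline{L}_\xi\eta\|$ for every $\eta\in D(\overline{L}_\xi)$. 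In particular $e\in D(H_{(\xi)})$, so $\mu(\xi)$ makes sense.

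Property (i) uses hypothesis \eqref{eqn_47}: applied with $x=e$ it gives $H_{(\xi)}y=(H_{(\xi)}e)\,y=\mu(\xi)\,y$ for every $y\in\Ao$. Since $H_{(\xi)}$ is positive self-adjoint, this forces
\[
\ip{\mu(\xi)y}{y}=\ip{H_{(\xi)}y}{y}\geq 0,\qquad \forall y\in\Ao,
\]
i.e.\ $\mu(\xi)\in\H_w^+$. Property (iii) is then immediate from the polar-decomposition identity recalled above:
\[
\|\mu(\xi)\|=\|H_{(\xi)}e\|=\|\overline{L}_\xi e\|=\|L_\xi e\|=\|\xi e\|=\|\xi\|,
\]
using that $e$ is the unit.

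The step I expect to require the most care is (ii), and it is the only place where the integrability hypothesis enters in an essential way. Suppose $\xi\in\H_w^+$; then in particular $\xi=\xi^{\ast}$, so $L_\xi=L_{\xi^{\ast}}$ is a positive symmetric operator on $\Ao$. A short calculation shows $L_\xi^{\ast}|_\Ao=L_{\xi^{\ast}}$, so integrability --- $\Ao$ being a core for $L_\xi^{\ast}$ --- yields $\overline{L}_\xi=\overline{L_{\xi^{\ast}}}=L_\xi^{\ast}$, which is to say that $\overline{L}_\xi$ is positive self-adjoint. Consequently $L_\xi^{\ast}\overline{L}_\xi=\overline{L}_\xi^{\,2}$ and, by uniqueness of the positive square root, $H_{(\xi)}=\overline{L}_\xi$. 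Therefore
\[
\mu(\xi)=H_{(\xi)}e=\overline{L}_\xi e=\xi e=\xi,
\]
proving (ii). Combining the three items shows that $\mu$ is a module function on $(\H,\Ao)$.
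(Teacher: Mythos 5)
Your proof is correct and follows essentially the same route as the paper's: define $\mu(\xi)=H_{(\xi)}e$, use \eqref{eqn_47} with $x=e$ to get $H_{(\xi)}y=(H_{(\xi)}e)y$ and hence w-positivity, use the polar-decomposition identity $\|H_{(\xi)}e\|=\|\overline{L}_\xi e\|$ for the norm condition, and use integrability to conclude $H_{(\xi)}=\overline{L}_\xi$ (hence $\mu(\xi)=\xi$) for w-positive $\xi$. Your write-up is if anything slightly more explicit than the paper's about why integrability gives essential self-adjointness of $L_\xi$ in step (ii).
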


\begin{proof} As it is well-known, $H_{(\xi)}$ is a positive self-adjoint operator with $D(H_{(\xi)})= D(\overline{L}_\xi).$
By choosing $x=e$ in \eqref{eqn_47}, we get $H_{(\xi)}y= (H_{(\xi)}e)y$ for every $y \in \Ao$; that is, $H_{(\xi)}y=L_{(H_{(\xi)e})}y$, for every $y\in \Ao$.

The module function is then defined everywhere in $\H$ by the following map $$\mu: \xi \in \H \to \mu(\xi):=H_{(\xi)}e \in \H.$$
We check the conditions (i)-(iii) of Definition \ref{defn_module}.\\
We have
$$ \ip{\mu(\xi)x}{x} = \ip{(H_{(\xi)}e) x}{x} = \ip{H_{(\xi)}x}{x} \geq 0, \quad \forall x \in \Ao$$
Hence $\mu(\xi)\in \H^+_w$ . Moreover, $\mu(\xi)=0$ if and only if $\xi=0$.

As for (ii), for every $\xi\in \H^+_w$, the operator $L_\xi$ is essentially self-adjoint and positive. Hence  $H_{(\xi)}= ((L_\xi)^*\overline{L}_\xi)^{1/2}=\overline{L}_\xi$. This implies that  $\mu(\xi)=\xi$.

Finally, for every $\xi \in \H$,
 $$ \|\xi\|^2 =\ip{\xi}{\xi}=\ip{\xi e}{\xi e}=\ip{L_\xi e}{L_\xi e}=\ip{H_{(\xi)} e}{H_{(\xi)} e}= \|\, \mu(\xi)\, \|^2.$$
So (iii) holds.
\end{proof}
\berem Condition (iii) of Definition \ref{defn_module} obviously implies  that $\mu$ is continuous at $0$. \enrem
\begin{lemma}\label{mod} Let $(\H,\Ao)$ be a unital integrable Hilbert quasi *-algebra with a module function $\mu$. Then,
for every $\xi \in \H$, with $\xi=\xi^*$, there exists two elements $\xi_+, \xi_-\in \H^+_w$ such that
$\xi=\xi_+-\xi_-$; $\mu(\xi)=\xi_++\xi_-$; $\xi_+\wmult \xi_-$ and $\xi_- \wmult \xi_+$ are both well-defined and $\xi_+\wmult \xi_-=\xi_- \wmult \xi_+=0$.
\end{lemma}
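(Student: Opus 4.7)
The plan is to use integrability to bring $\overline{L_\xi}$ into a self-adjoint form and then invoke the spectral theorem for the decomposition; the property \eqref{eqn_47} will transfer the operator-level splitting to the level of vectors in $\H$.

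Since $\xi=\xi^*$ and $(\H,\Ao)$ is integrable, the remark following Definition \ref{defn_integrable} tells us that $T:=\overline{L_\xi}$ is (essentially) self-adjoint. I would then apply the Borel functional calculus with $f(t)=t^+$ and $g(t)=t^-$ to obtain positive self-adjoint operators $A:=f(T)$ and $B:=g(T)$ satisfying $T=A-B$ and $|T|=A+B=H_{(\xi)}$. Since $e\in D(H_{(\xi)})\subseteq D(A)\cap D(B)$, I set
$$ \xi_+:=Ae=\tfrac{1}{2}\bigl(\mu(\xi)+\xi\bigr),\qquad \xi_-:=Be=\tfrac{1}{2}\bigl(\mu(\xi)-\xi\bigr),$$
from which the identities $\xi=\xi_+-\xi_-$ and $\mu(\xi)=\xi_++\xi_-$ are immediate.

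To verify $\xi_\pm\in\H^+_w$, the key step is to show that $A$ and $B$ inherit the module property \eqref{eqn_47} of $H_{(\xi)}$. Indeed, $T(xy)=\xi(xy)=(\xi x)y=(Tx)y$ for $x,y\in\Ao$, and together with \eqref{eqn_47} the identities $A=\tfrac{1}{2}(H_{(\xi)}+T)$ and $B=\tfrac{1}{2}(H_{(\xi)}-T)$ give $A(xy)=(Ax)y$ and $B(xy)=(Bx)y$. Setting $x=e$ yields $Ay=\xi_+y=L_{\xi_+}y$ and $By=\xi_-y=L_{\xi_-}y$ for every $y\in\Ao$, whence $\ip{L_{\xi_\pm}y}{y}\geq 0$. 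Thus $\xi_\pm\in\H^+_w$, and in particular $\xi_\pm^*=\xi_\pm$.

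For the orthogonality of the partial products, since $f\cdot g\equiv 0$ the spectral calculus gives $ABx=0$ for every $x\in\Ao$ (the domain condition $Bx\in D(A)$ holds because $\int f^2\,d\|E(Bx)\|^2=\int f^2g^2\,d\|Ex\|^2=0$, where $E$ is the spectral measure of $T$). Using the self-adjointness of $A$, I compute $\ip{\xi_-x}{\xi_+^{*}y}=\ip{Bx}{Ay}=\ip{ABx}{y}=0$ for all $x,y\in\Ao$, which by the very definition of $\wmult$ means $\xi_+\wmult\xi_-=0$; the symmetric computation gives $\xi_-\wmult\xi_+=0$. The principal obstacle I anticipate is ensuring that the spectral parts $A$ and $B$ respect the module structure over $\Ao$, i.e.\ the identification of the abstract spectral operators with the left multiplications $L_{\xi_\pm}$; this is exactly where the hypothesis \eqref{eqn_47} enters, preventing the argument from being purely operator-theoretic.
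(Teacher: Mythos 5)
Your proof is correct, and it arrives at exactly the same decomposition as the paper, namely $\xi_\pm=\tfrac12(\mu(\xi)\pm\xi)=\tfrac12(H_{(\xi)}\pm\overline{L}_\xi)e$, but the verification runs along a genuinely different track. One shared caveat first: like the paper's own proof, you tacitly assume that the module function is the one constructed in the preceding proposition, i.e.\ that \eqref{eqn_47} holds and $\mu(\xi)=H_{(\xi)}e$, whereas the lemma's statement only postulates a module function in the sense of Definition \ref{defn_module}; this should be made explicit, but it is not a defect relative to the paper. Where you diverge is in the machinery: the paper establishes w-positivity of $\xi_\pm$ by an informal ``worst case'' cone argument and proves $\xi_+\wmult\xi_-=\xi_-\wmult\xi_+=0$ by showing $\|\mu(\xi)x\|=\|\xi x\|$, polarizing, and invoking $\mu\circ\mu=\mu$; you instead use integrability to see that $T=\overline{L}_\xi$ is self-adjoint with $H_{(\xi)}=|T|$, identify $\xi_\pm$ with $f(T)e$ and $g(T)e$ for $f(t)=\max(t,0)$, $g(t)=\max(-t,0)$, transfer the module property \eqref{eqn_47} to $f(T)$ and $g(T)$ by linearity so that $f(T)y=L_{\xi_+}y$ on $\Ao$, and then read off positivity from $f(T)\geq0$ and orthogonality from $fg\equiv0$ (with the domain check $g(T)x\in D(f(T))$ handled correctly). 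Your route buys rigor precisely where the paper is weakest: the inequality $|\ip{\xi x}{x}|\leq\ip{\mu(\xi)x}{x}$ that the paper's cone remark glosses over is exactly $|\ip{Tx}{x}|\leq\ip{|T|x}{x}$, which the functional calculus gives for free. The paper's computation, in exchange, stays closer to the axioms of the module function and avoids the full spectral theorem. Both arguments are sound.
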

\begin{proof}

(ii): Let us define the following two operators on $D(\overline{L}_\xi)=D(H_{(\xi)})$:
$$ P_{(\xi)}:= \frac{1}{2}(H_{(\xi)}+\overline{L}_\xi), \qquad N_{(\xi)}:= \frac{1}{2}(H_{(\xi)}-\overline{L}_\xi).$$
Put $\xi_+ = P_{(\xi)}e$ and $\xi_- =N_{(\xi)}e$. These elements are in $\Hil_w^+$, indeed for every $x\in\Ao$ we have
$$2\ip{\xi_+x}{x}=\ip{(H_{(\xi)}e+\overline{L}_{\xi}e)x}{x}=\ip{\mu(\xi)x}{x}+\ip{\xi x}{x}\geq0.$$
Since the wedge of w-positive elements $\Hil^+_w$ is in fact a cone, the worst case happens when $\ip{\xi x}{x}\leq0$ for every $x\in\Ao$, i.e. when $-\xi\in\Hil_w^+$, but $\mu(\xi)=\mu(-\xi)=-\xi$.
The proof for $\xi_-$ is similar.

It is clear that $\xi=\xi_+-\xi_-$ and $\mu(\xi)=\xi_++\xi_-$.

Let $x,y\in\Ao$. We want to show that $\xi_+\wmult\xi_-=0$, i.e.
$$\ip{(H_{(\xi)}e-\overline{L}_{\xi}e)x}{(H_{(\xi)}e+\overline{L}_{\xi}e)y}=0.$$
Computing in the expression above, we obtain
\begin{align*}\ip{(H_{(\xi)}e)x}{(H_{(\xi)}e)y}&+\ip{(H_{(\xi)}e)x}{\overline{L}_{\xi}y}-\ip{\overline{L}_{\xi}x}{(H_{(\xi)}e)y}-\ip{\overline{L}_{\xi}x}{\overline{L}_{\xi}y}\\
=&\ip{\mu(\xi)x}{\mu(\xi)y}+\ip{\mu(\xi)x}{\xi y}-\ip{\xi x}{\mu(\xi)y}-\ip{\xi x}{\xi y}
\end{align*}
Now, we can show that $\|\mu(\xi)x\|=\|\xi x\|$, for every $x\in\Ao$. Indeed,
\begin{align*}
\|\xi x\|^2 &=\ip{\xi x}{\xi x}=\ip{L_{\xi}x}{L_{\xi}x}\\
&=\ip{H_{(\xi)}x}{H_{(\xi)}x}=\ip{(H_{(\xi)}e)x}{(H_{(\xi)}e)x}\\
&=\|\, \mu(\xi)x\, \|^2, \quad \forall \xi \in \H.
\end{align*}
By the polarization formula we have $\ip{\mu(\xi)x}{\mu(\xi)y}=\ip{\xi x}{\xi y}$ for every $x,y\in\Ao$.

Notice that, by (ii) of Definition \ref{defn_module}, $\mu^2=\mu$, i.e. $\mu\left(\mu(\xi)\right)=\mu(\xi)$ for every $\xi\in\Hil$. Hence, if $\mu^2(\xi):=\mu\left(\mu(\xi)\right)$,
$$\ip{\mu(\xi)x}{\xi y}=\ip{\mu^2(\xi)x}{\mu(\xi)y}=\ip{\mu(\xi)x}{\mu^2(\xi)y}=\ip{\xi x}{\mu(\xi)y}.$$

For what we have just shown, we have that $\xi_-\wmult\xi_+$ is well-defined and $\xi_-\wmult\xi_+=0$. Using the same argument, it is easily proven that $\xi_+\wmult\xi_-=0$. Then $\xi_+$ and $\xi_-$ have the desired properties.
\end{proof}

\begin{prop}\label{mod_1} Let $(\H,\Ao)$ be a Hilbert quasi *-algebra with a continuous module function. Suppose that $\mu(\Ao) \subseteq \Ao^+$.
 Then every positive element is the limit of a sequence of elements of $\Ao^+$; i.e., $\H^+=\H^+_w$.
\end{prop}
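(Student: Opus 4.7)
The plan is to establish the reverse inclusion $\H^+_w \subseteq \H^+$, since the opposite inclusion is already provided by the previous lemma. So the statement reduces to showing that every w-positive element can be approximated in norm by elements of $\Ao^+$.

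Fix $\xi \in \H^+_w$. First I would record the structural facts I need: w-positive elements satisfy $\xi = \xi^*$, and by property (ii) of Definition \ref{defn_module}, the module function acts as the identity on $\H^+_w$, so $\mu(\xi)=\xi$. Next, using density of $\Ao$ in $\H$ together with continuity of the involution (which is isometric on $\H$), I can pick a sequence $\{x_n\}\subset\Ao$ with $x_n=x_n^*$ and $x_n\to\xi$ in the norm topology $\tau_n$ (replacing an arbitrary approximating sequence by its hermitian part if necessary).

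Now apply the module function term by term. By the hypothesis $\mu(\Ao)\subseteq\Ao^+$, each $\mu(x_n)$ is an element of $\Ao^+$. By continuity of $\mu$ and $\mu(\xi)=\xi$, we have
\[
\mu(x_n)\;\longrightarrow\;\mu(\xi)\;=\;\xi \qquad \text{in } \tau_n.
\]
Thus $\xi$ lies in the $\tau_n$-closure $\overline{\Ao^+}^{\tau_n}=\H^+$, which completes the inclusion $\H^+_w\subseteq\H^+$ and hence the equality $\H^+=\H^+_w$.

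The argument is short and there is no real obstacle once the module function's basic properties are in hand; the only point worth a line of justification is the passage to a self-adjoint approximating sequence, which uses the isometric nature of the involution on $\H$ (a consequence of axiom (iii) in the definition of a Hilbert algebra). The two crucial inputs are (a) the identity $\mu(\xi)=\xi$ on $\H^+_w$, and (b) the hypothesis $\mu(\Ao)\subseteq \Ao^+$, which together with continuity of $\mu$ let us transfer an arbitrary $\tau_n$-approximation from $\Ao$ into an approximation by positive elements of $\Ao$.
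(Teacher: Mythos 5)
Your proof is correct and follows essentially the same route as the paper: approximate $\xi\in\H^+_w$ by a sequence in $\Ao$, apply $\mu$, and use continuity together with $\mu(\xi)=\xi$ and $\mu(\Ao)\subseteq\Ao^+$. The extra step of passing to a hermitian approximating sequence is harmless but not needed, since the hypotheses already force $\mu(x_n)\in\Ao^+$ and $\mu(x_n)\to\mu(\xi)=\xi$ for any approximating sequence.
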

\begin{proof}
Let $\xi \in \H^+_w$ then by density there exists a sequence $\{x_n\} \subset \Ao$ such that $x_n \to \xi$. By the assumptions $\mu(x_n)\in \Ao^+$ and $\mu(x_n)\to \mu(\xi)=\xi$.
\end{proof}

{ \begin{prop} \label{prop_4.27}Let $(\H,\Ao)$ be a unital integrable Hilbert quasi *-algebra with a $\tau_n-$continuous module function. Suppose that $\mu(\Ao) \subseteq \Ao^+$.
Every $\omega\in \rep$ which is positive on $\H^+_w$ is continuous.
\end{prop}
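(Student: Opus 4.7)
My strategy is to prove $|\omega(\xi)|\leq C\|\xi\|$ for every $\xi\in\H$ by first reducing, via the module-function decomposition of Lemma \ref{mod}, to the case of w-positive elements, and then bounding $\omega$ on the w-positive cone by a contradiction argument.

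The hypotheses permit me to invoke Proposition \ref{mod_1}, which yields $\H^+_w=\H^+=\overline{\Ao^+}^{\tau_n}$; thus $\H^+_w$ is a $\tau_n$-closed cone in $\H$. Positivity of $\omega$ on this cone automatically makes $\omega$ order preserving: $\eta-\zeta\in\H^+_w$ implies $\omega(\zeta)\leq\omega(\eta)$.

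The core step is to bound $\omega$ on $K:=\{\zeta\in\H^+_w:\|\zeta\|\leq 1\}$. Suppose, for contradiction, that $\sup_K\omega=+\infty$; then I can pick $\zeta_n\in K$ with $\omega(\zeta_n)\geq 4^n$ and set
$$\chi:=\sum_{n=1}^\infty 2^{-n}\zeta_n,$$
the series converging absolutely in $\H$ since $\sum 2^{-n}\|\zeta_n\|\leq 1$. The partial sums $\chi_N:=\sum_{n=1}^N 2^{-n}\zeta_n$ are finite non-negative combinations of elements of $\H^+_w$, hence in $\H^+_w$; closedness of the cone then places both $\chi$ and each tail $\chi-\chi_N$ in $\H^+_w$. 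Combining order-preservation with linearity,
$$\omega(\chi)\geq\omega(\chi_N)=\sum_{n=1}^N 2^{-n}\omega(\zeta_n)\geq\sum_{n=1}^N 2^n,$$
and letting $N\to\infty$ forces $\omega(\chi)=+\infty$, contradicting that $\omega(\chi)$ is a complex number. Therefore $C:=\sup_K\omega<\infty$.

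To conclude, write any $\xi\in\H$ as $\xi=\alpha+i\beta$ with $\alpha=\tfrac12(\xi+\xi^*)$ and $\beta=\tfrac{1}{2i}(\xi-\xi^*)$, both self-adjoint and of norm at most $\|\xi\|$. Lemma \ref{mod} yields decompositions $\alpha=\alpha_+-\alpha_-$ and $\beta=\beta_+-\beta_-$ with all four summands in $\H^+_w$, explicitly $\alpha_\pm=\tfrac12(\mu(\alpha)\pm\alpha)$ and similarly for $\beta$. Condition (iii) of Definition \ref{defn_module} then gives $\|\alpha_\pm\|,\|\beta_\pm\|\leq\|\xi\|$, and together with the bound on $K$ this yields $|\omega(\xi)|\leq\omega(\alpha_+)+\omega(\alpha_-)+\omega(\beta_+)+\omega(\beta_-)\leq 4C\|\xi\|$, establishing continuity. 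The main obstacle is the boundedness step on $K$: it rests decisively on the closedness of $\H^+_w$ under countable $\tau_n$-sums, which is available only because Proposition \ref{mod_1} (and hence the hypotheses on $\mu$) identifies $\H^+_w$ with the $\tau_n$-closed set $\H^+$.
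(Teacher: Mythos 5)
Your proof is correct and follows essentially the same route as the paper: a bound for $\omega$ on the positive cone $\H^+_w=\H^+$ (which the paper outsources to a citation of \cite{Trap1} and you establish directly with the standard geometric-series absorption argument), followed by the decomposition of an arbitrary element into w-positive pieces via Lemma \ref{mod} and the norm control from condition (iii) of Definition \ref{defn_module}. One cosmetic remark: the $\tau_n$-closedness of $\H^+_w$ that you need for the tails of your series already follows from its definition, since each condition $\ip{\xi x}{x}\geq 0$ is closed by continuity of $R_x$, so it does not really hinge on Proposition \ref{mod_1}.
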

\begin{proof} As shown in \cite{Trap1}, every positive functional on $\H^+_w=\H^+$ (by Proposition \ref{mod_1}) is bounded on positive elements. The statement then follows from Lemma \ref{mod}.\end{proof}}

{\section{Examples: Function quasi *-algebras}\label{Sec5}}
Let $I$ be a compact interval of the real line and  $\lambda$ the Lebesgue measure on it.
In this section we will show that every representable functional over $\left(L^2(I,d\lambda),\Ao\right)$, where $\Ao= C(I)$ or $\Ao=L^{\infty}(I,d\lambda)$, is continuous.

Both $\left(L^2(I,d\lambda),C(I)\right)$ and $\left(L^2(I,d\lambda),L^{\infty}(I,d\lambda)\right)$ are Hilbert quasi *-algebras and, as it easy to see, both are integrable in the sense of Definition \ref{defn_integrable}. A description of representable functionals on $\left(L^2(I,d\lambda),C(I)\right)$ is provided
by the following representation theorem.

\begin{prop}\label{prop_52}
Let $\omega$ be a representable functional on $(L^2(I,d\lambda), C(I))$. Then there exists a {unique} Borel measure $\mu$ on $I$ and a {unique} bounded linear operator $T: L^2(I, d\lambda)\to L^2(I, d\mu)$ such that
\begin{equation}\label{eqn_int_omega} \omega(f) = \int_I (Tf)d\mu, \quad \forall f \in L^2(I, \lambda).\end{equation}
The operator $T$ satisfies the following conditions:
\begin{align}
&T(f\phi)=(Tf)\phi=\phi(Tf)\quad \forall f\in L^2(I,d\lambda),\phi\in C(I); \label{eqn_1.2} \\
&T\phi=\phi, \quad \forall \phi \in C(I). \label{eqn_1.3}
\end{align}
Thus, every representable functional $\omega$ on $\left(L^2(I,d\lambda),C(I)\right)$ is continuous.
Moreover, $\mu$ is absolutely continuous with respect to $\lambda$.
\end{prop}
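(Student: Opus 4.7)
The plan is to combine the Riesz--Markov theorem with the GNS machinery developed in Sections \ref{Sec2}--\ref{Sec3}, using Proposition \ref{comm_HQA} to get continuity of $\omega$ almost for free, after which the representation formula and the stated properties of $T$ become essentially structural.

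First I would apply the Riesz representation theorem to the positive linear functional $\omega|_{C(I)}$ on the commutative C*-algebra $C(I)$, producing a unique regular Borel measure $\mu$ on $I$ with $\omega(\phi)=\int_I\phi\,d\mu$ for every $\phi\in C(I)$. Setting $N_\omega=\{\phi\in C(I):\omega(\phi^\ast\phi)=0\}$, the inner product on $C(I)/N_\omega$ coincides with the restriction of the $L^2(I,d\mu)$ inner product, so by density of $C(I)$ in $L^2(I,d\mu)$ the GNS Hilbert space $\Hil_\omega$ is canonically isometrically isomorphic to $L^2(I,d\mu)$, with the cyclic vector $\xi_\omega$ corresponding to the constant function $1$.

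Next I would deduce continuity of $\omega$ from Proposition \ref{comm_HQA}: $(L^2(I,d\lambda),C(I))$ is a commutative unital Hilbert quasi *-algebra, $C(I)$ is a Banach *-algebra under $\|\cdot\|_0=\|\cdot\|_\infty$, and for the identity function $x(t)=t$ the bounded self-adjoint operator $\overline{R}_x$ on $L^2(I,d\lambda)$ has spectrum equal to $I$ with no eigenvalues, hence $\sigma(\overline{R}_x)=\sigma_c(\overline{R}_x)$. Thus $\omega$ is bounded, and by Proposition \ref{prop_charuniformly_rep} the operator $T_\omega$ of Proposition \ref{prop_operatorT} is a bounded map $L^2(I,d\lambda)\to L^2(I,d\mu)$. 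Since $\ip{g}{1}_{L^2(\mu)}=\int_I g\,d\mu$, we obtain
$$\omega(f)=\ip{T_\omega f}{\xi_\omega}=\int_I (T_\omega f)\,d\mu,$$
and taking $T:=T_\omega$ the identities $T_\omega(f\phi)=(T_\omega f)\lambda_\omega(\phi)$ and $T_\omega\phi=\lambda_\omega(\phi)$ that appear in the proof of Proposition \ref{comm_HQA} translate, under our identification $\lambda_\omega(\phi)=\phi$, into \eqref{eqn_1.3} and the first equality of \eqref{eqn_1.2}, the second equality being a consequence of commutativity.

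Finally, for $\mu\ll\lambda$ I would exploit the norm-continuity of $\omega$ directly: given a compact $K\subset I$ with $\lambda(K)=0$, outer regularity of $\lambda$ together with Urysohn's lemma furnishes a sequence $\phi_n\in C(I)$ with $0\leq\phi_n\leq 1$, $\phi_n\equiv 1$ on $K$ and supports contained in open neighbourhoods of $K$ whose $\lambda$-measure tends to $0$; then $\|\phi_n\|_2\to 0$ while $\omega(\phi_n)\geq\mu(K)$, forcing $\mu(K)=0$, and inner regularity of $\mu$ extends this to arbitrary $\lambda$-null Borel sets. Uniqueness of $\mu$ is immediate from Riesz--Markov and uniqueness of $T$ follows from the density of $C(I)$ in $L^2(I,d\lambda)$ combined with \eqref{eqn_1.3}. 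The main obstacle lies in verifying the hypotheses of Proposition \ref{comm_HQA}, in particular checking the spectral condition for some $\overline{R}_x$; once continuity of $\omega$ is in hand, the rest of the proof is essentially algebraic.
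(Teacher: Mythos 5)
Your proof is correct and follows the same overall architecture as the paper's: Riesz--Markov for $\mu$, identification of the GNS space $\Hil_\omega$ with $L^2(I,d\mu)$, the operator $T=T_\omega$ of Proposition \ref{prop_operatorT}, and automatic continuity via the Sinclair critical-eigenvalue theorem applied to the intertwining relation \eqref{eqn_1.2}. The only organizational difference is the order of steps: the paper first builds $T$ concretely from (L.3) and the Riesz lemma on $L^2(I,d\mu)$ and then applies Sinclair's theorem to $T$ itself (citing Proposition \ref{comm_HQA}), whereas you first obtain boundedness of $\omega$ from Proposition \ref{comm_HQA} and then pass to $T_\omega$ via Proposition \ref{prop_charuniformly_rep}; both routes rest on exactly the same spectral hypothesis for $\overline{R}_x$, which you verify correctly with the identity function. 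The one genuinely different ingredient is the proof that $\mu\ll\lambda$: the paper uses boundedness of $T$ to form $T^*$ and shows $\int_I\phi\,d\mu=\int_I\phi\,(T^*u)\,d\lambda$ for all $\phi\in C(I)$, which by uniqueness in Riesz--Markov yields $d\mu=w\,d\lambda$ with the Radon--Nikodym derivative $w=T^*u$ exhibited explicitly; your Urysohn-approximation argument ($0\le\phi_n\le 1$, $\phi_n\equiv 1$ on a $\lambda$-null compact $K$, $\|\phi_n\|_2\to 0$, $\omega(\phi_n)\ge\mu(K)$, plus regularity of $\mu$) is also valid but only gives absolute continuity qualitatively. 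The paper's version buys the explicit density (used implicitly to see that $\omega$ is positive on $L^2$-positive elements), while yours is more elementary and does not need $T^*$.
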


\begin{proof}
By definition $\omega$ is positive on $C(I)$. Therefore by the Riesz-Markov theorem, there exists a unique Borel measure $\mu$ on $I$ such that
$$ \omega(\phi) =\int_I \phi d\mu, \quad \forall \phi \in C(I).$$
By condition (L.3), for every $f \in L^2(I, d\lambda)$, there exists $\gamma_f$ such that
$$|\omega(f^*\phi)| \leq \gamma_f \|\phi\|_{2,\mu}, \quad \forall \phi \in C(I).$$
Hence, the linear functional $L_f$ defined by $L_f(\phi)=\omega(f^*\phi)$, $\phi \in C(I)$, is bounded on $C(I)$, with respect to  $\|\cdot\|_{2,\mu}$. Thus, it extends to a bounded linear functional on $L^2(I,d\mu)$ and there exists a unique function $h_f\in L^2(I,d\mu)$ such that
\begin{equation}\label{eqn_L3} \omega(f^*\phi)=\int_I \phi \overline{h_f}d\mu, \quad \forall \phi \in C(I).\end{equation}
We can define a linear map $T: L^2(I, d\lambda) \to L^2(I,d\mu)$ by putting $Tf=h_f$, $f \in L^2(I, d\lambda)$. With this definition we have
$$ \omega(f) = \int_I (Tf)d\mu, \quad \forall f \in L^2(I, d\lambda).$$
The equalities \eqref{eqn_1.2} and \eqref{eqn_1.3} are easily proved.
In particular, \eqref{eqn_1.2} can be rewritten as follows
$$TR_{\phi}f=R'_{\phi}Tf,\quad\forall f\in L^2(I,\lambda), \phi\in C(I), $$
where $R_{\phi}$ and $R'_{\phi}$ denote the multiplication operators by $\phi$ in $L^2(I,d\lambda)$ and $L^2(I,d\mu)$, respectively.
This means that $T$ intertwines the couple $(R_\phi,R'_\phi)$ for every  $\phi\in C(I)$. 
The operator $T$ is continuous if, and only if, there exists $\phi \in C(I)$ such that the couple $(R_\phi,R'_\phi)$ has no critical eigenvalues (again by \cite[Theorem 5.5]{Sin}).

In $L^2(I,d\lambda)$, ($\lambda$ the Lebesgue measure) the operator $R_\phi$ has continuous spectrum. Hence, the statement follows from Proposition \ref{comm_HQA}.


Since $T$ is bounded, it has an adjoint $T^*: L^2(I, d\mu) \to L^2(I, d\lambda)$. Hence, if we denote by $u$ the unit function in $C(I)$ (i.e., $u(x)=1$, for every $x \in I$), we get
\begin{equation}\label{eqn_abscont} \omega(f)= \int_I Tf d\mu= \int_I (Tf)u d\mu = \int_I f \overline{(T^*u)} d\lambda, \quad \forall f \in L^2(I, d\lambda).\end{equation}
It is easily seen that $T^*u$ is a nonnegative function and by \eqref{eqn_abscont} it follows also that $\omega$ is necessarily positive on positive elements of $L^2(I,d\lambda)$.

Put $w=T^*u$.
From \eqref{eqn_abscont}, we get, in particular,
$$ \omega(\phi)=\int_I \phi d\mu=  \int_I \phi w d\lambda, \quad \forall \phi \in C(I).$$
The previous equality implies by the uniqueness of the measure associated to a positive linear functional on $C(I)$  us that $d\mu=wd\lambda$; i.e., $\mu$ is $\lambda$-absolutely continuous with Radon-Nikodym derivative $w$.
\end{proof}

\medskip
Let us now consider the Banach quasi *-algebra $\left(L^2(I,d\lambda),L^{\infty}(I,d\lambda)\right)$. In this case we have more information about the measure that allows us to represent the functional.

\begin{prop}\label{prop_53} Let $\omega$ be a representable functional on the Banach quasi *-algebra $\left(L^2(I,d\lambda), L^{\infty}(I,d\lambda)\right)$. Then there exists a {unique} {bounded finitely additive measure} $\nu$ on $I$ which vanish on subsets of $I$  of zero $\lambda-$measure and a {unique} bounded linear operator $S: L^2(I, d\lambda)\to L^2(I, d\nu)$ such that
\begin{equation}\label{eqn_int_omega} \omega(f) = \int_I (Sf)d\nu, \quad \forall f \in L^2(I, d\lambda).
\end{equation}
The map $S$ has the following properties:
\begin{align}
&S(f\phi)=(Sf)\phi=\phi(Sf)\quad \forall f\in L^2(I,d\lambda), \phi\in L^{\infty}(I,d\lambda),; \label{eqn_1.6} \\
&S\phi=\phi, \quad \forall \phi \in L^{\infty}(I,d\lambda). \label{eqn_1.7}
\end{align}
\end{prop}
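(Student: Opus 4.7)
The plan is to follow the same pattern as Proposition \ref{prop_52}, replacing the Riesz--Markov representation (valid for $C(I)$) by the $L^\infty$-duality, and retaining the intertwining argument for continuity. First, I would identify the measure $\nu$. Since $\omega$ satisfies (L.1), the restriction $\omega|_{L^\infty(I, d\lambda)}$ is a positive linear functional on the unital commutative C*-algebra $L^\infty(I, d\lambda)$, and is therefore automatically bounded with norm $\omega(\chi_I)$. By the standard representation of the dual of $L^\infty(I, d\lambda)$ as the space of bounded finitely additive signed measures on the Borel $\sigma$-algebra of $I$ vanishing on $\lambda$-null sets, there exists a unique (necessarily positive) such $\nu$ with $\omega(\phi) = \int_I \phi\, d\nu$ for every $\phi \in L^\infty(I, d\lambda)$.

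Second, I would construct $S$ exactly as in the proof of Proposition \ref{prop_52}. For each $f \in L^2(I, d\lambda)$, condition (L.3) together with the representation of $\omega$ just obtained yields
\[ |\omega(f^*\phi)| \leq \gamma_f\, \omega(\phi^*\phi)^{1/2} = \gamma_f \|\phi\|_{2,\nu}, \quad \forall \phi \in L^\infty(I, d\lambda). \]
Since $L^\infty(I, d\lambda)$ is dense in $L^2(I, d\nu)$, the linear functional $\phi \mapsto \omega(f^*\phi)$ extends uniquely to a bounded linear functional on $L^2(I, d\nu)$; by Riesz representation there is a unique $h_f \in L^2(I, d\nu)$ with $\omega(f^*\phi) = \int_I \phi\, \overline{h_f}\, d\nu$. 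Setting $Sf := h_f$ and using (L.2) with $\phi = \chi_I$ gives \eqref{eqn_int_omega}. Properties \eqref{eqn_1.6} and \eqref{eqn_1.7} follow by direct computation from the defining relation, mirroring the verification for Proposition \ref{prop_52} and using commutativity of the module action of $L^\infty(I, d\lambda)$ on $L^2(I, d\lambda)$.

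Third, I would deduce boundedness of $S$ (hence continuity of $\omega$) through the intertwining argument. Relation \eqref{eqn_1.6} reads $SR_\phi = R'_\phi S$ for every $\phi \in L^\infty(I, d\lambda)$, where $R_\phi$ and $R'_\phi$ are multiplication by $\phi$ on $L^2(I, d\lambda)$ and $L^2(I, d\nu)$ respectively. Choosing $\phi(t) = t$, the operator $R_\phi$ on $L^2(I, d\lambda)$ has purely continuous spectrum equal to $I$, so $(R_\phi - \mu I)L^2(I, d\lambda)$ is dense (hence of codimension zero) for every $\mu \in \mathbb{C}$. Thus the couple $(R_\phi, R'_\phi)$ has no critical eigenvalue in the sense of \cite{Sin}, and an application of \cite[Theorem 5.5]{Sin}---equivalently Proposition \ref{comm_HQA}---yields continuity of $S$ and therefore of $\omega$.

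The main technical obstacle is the first step: one must invoke the correct form of the $ba$-space representation of $(L^\infty(I, d\lambda))^*$ and verify uniqueness of the representing measure within that class. Note that, unlike in Proposition \ref{prop_52} where the $\lambda$-absolute continuity of the representing measure had to be derived \emph{a posteriori} via the adjoint of $T$, here the fact that $\nu$ vanishes on $\lambda$-null sets is automatic, since elements of $L^\infty(I, d\lambda)$ are equivalence classes modulo $\lambda$-null sets and the representing functional must respect this identification.
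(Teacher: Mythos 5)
Your proposal is correct and takes essentially the same route as the paper: the paper likewise represents $\omega|_{L^{\infty}(I,d\lambda)}$ by a positive bounded finitely additive measure vanishing on $\lambda$-null sets (citing \cite[Th.\ IV.8.16]{DuSc} for the duality you invoke), builds $S$ from condition (L.3) exactly as $T$ was built in Proposition \ref{prop_52}, and obtains boundedness of $S$ from the intertwining relation by choosing a continuous multiplier whose multiplication operator has purely continuous spectrum. No substantive differences.
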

\begin{proof} The proof of this statement is essentially the same as that of Proposition \ref{prop_52}.
Indeed, by \cite[Th. IV.8.16]{DuSc}, there exists a complex valued measure $\nu$ {absolutely continuous} with respect to $\lambda$, for which $\omega$ has the following form
$$\omega(\phi)=\int_I\phi d\nu\qquad\phi\in L^{\infty}(I,d\lambda).$$
Since the functional $\omega$ is positive, i.e. $\omega(\phi^{\ast}\phi)\geq0$, for every $\phi\in L^{\infty}(I,d\lambda)$, the measure $\nu$ is positive.

Following the proof of Proposition \ref{prop_52}, one can show that there exists a unique linear map
$S:L^2(I,d\lambda)\to L^2(I,d\nu)$
such that
$$\omega(f)=\int_I(Sf)d\nu, ,\quad\forall f\in L^2(I,d\lambda).$$
The boundedness of $S$ follows directly from the analogous statement used for $T$ in the proof of Proposition \ref{prop_52}, taking into account that $S$ intertwines the multiplication operators for a function $\phi\in L^\infty(I,\lambda)$. In particular $\phi$ can be chosen to be continuous.
\end{proof}

It is clear that the continuity of the operators $T$ and $S$ implies the continuity of the corresponding representable functionals.
Thus, we can conclude with the following
\begin{cor} Every representable functional over the Banach quasi *-algebras $\left(L^2(I,d\lambda),C(I)\right)$ or $\left(L^2(I,d\lambda),L^{\infty}(I,d\lambda)\right)$, where $I$ is a compact interval of the real line and  $\lambda$ be the Lebesgue measure on it, is continuous.
\end{cor}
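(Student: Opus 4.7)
The plan is to derive the corollary as an essentially immediate consequence of the two integral representation theorems (Propositions \ref{prop_52} and \ref{prop_53}), since the hard analytic content (boundedness of the intertwining operators $T$ and $S$) has already been established there via Proposition \ref{comm_HQA} and the Sinclair-type criterion about critical eigenvalues.

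First, I would treat the case $\Ao=C(I)$. By Proposition \ref{prop_52} there exists a bounded operator $T: L^2(I,d\lambda)\to L^2(I,d\mu)$ (with $\mu$ absolutely continuous with respect to $\lambda$, having Radon--Nikodym derivative $w=T^*u\in L^2(I,d\lambda)$, where $u\equiv 1$) such that
$$\omega(f)=\int_I (Tf)\,d\mu = \int_I f\,\overline{T^*u}\,d\lambda, \quad \forall f\in L^2(I,d\lambda).$$
An application of the Cauchy--Schwarz inequality then yields
$$|\omega(f)| \leq \|T^*u\|_2 \,\|f\|_2 \leq \|T\|\, \|u\|_{L^2(\mu)}\,\|f\|_2,$$
which is precisely the continuity of $\omega$ with respect to the $L^2$-norm.

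For the second case, $\Ao = L^\infty(I,d\lambda)$, I would run the analogous argument using Proposition \ref{prop_53}: here $\nu$ is a finite positive measure on $I$ (absolutely continuous with respect to $\lambda$) and $S:L^2(I,d\lambda)\to L^2(I,d\nu)$ is a bounded operator representing $\omega$. Since $\nu(I)<\infty$, the constant function $1$ lies in $L^2(I,d\nu)$, so Cauchy--Schwarz gives
$$|\omega(f)| = \left|\int_I (Sf)\cdot 1\, d\nu \right| \leq \nu(I)^{1/2}\|Sf\|_{L^2(\nu)} \leq \nu(I)^{1/2}\|S\|\,\|f\|_2,$$
establishing continuity. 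One could alternatively pass, exactly as in the proof of Proposition \ref{prop_52}, through the adjoint $S^*$ and write $\omega(f)=\int_I f\,\overline{S^*1}\,d\nu$, but the displayed bound is already sufficient.

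There is really no substantive obstacle at this stage: all the work has been done upstream, where the intertwining identities $TR_\phi = R'_\phi T$ (and the analogous one for $S$) combined with Proposition \ref{comm_HQA} forced $T$ and $S$ to be bounded. The only thing worth double-checking is that in the $L^\infty$ setting the operator $S$ can still be controlled via an element $\phi\in L^\infty(I,d\lambda)$ whose multiplication operator on $L^2(I,d\lambda)$ has purely continuous spectrum, so that Proposition \ref{comm_HQA} applies; as noted in the proof of Proposition \ref{prop_53}, choosing such a $\phi$ inside $C(I)\subset L^\infty(I,d\lambda)$ (for instance $\phi(x)=x$) suffices. Once boundedness of $T$ and $S$ is in hand, the corollary is a one-line Cauchy--Schwarz estimate.
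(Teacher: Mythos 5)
Your proposal is correct and matches the paper's own argument: the corollary is stated there as an immediate consequence of the boundedness of $T$ and $S$ established in Propositions \ref{prop_52} and \ref{prop_53}, and your Cauchy--Schwarz estimates simply make explicit the one line the paper leaves to the reader. (Only a cosmetic slip: in your alternative remark for the second case, $\omega(f)=\int_I f\,\overline{S^*1}\,d\lambda$, not $d\nu$, since $S^*$ maps into $L^2(I,d\lambda)$.)
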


\section*{Conclusion remarks}
Representable functionals on locally convex quasi *-algebras play a very important role not only for the understanding of their structure (see \cite{Ant2, bell_ct, Frag2, Lass2}), but also for several applications (see \cite{Bag8, Bag6, Bag7, Lass3, Trap3}). A typical istance is provided (as mentioned in the Introduction) by the mathematical description of certain quantum models (coming from Quantum Statistics and Quantum Field Theory) if the usual approach where the observable are supposed to be in a C*-algebra fails.

In this paper we have investigated the problem of continuity of functionals for which a GNS construction is possible. Even though our answer to this problem is still incomplete (but work is in progress), we thing that the results discussed here contribute to give a better insight on the structure of locally convex (Banach, in particular) quasi *-algebras.

\bigskip
{\bf{Acknowledgement:} }
This work has been done in the framework of the project ''Quasi *-algebre di operatori: propriet\`a spettrali ed applicazioni'', INDAM-GNAMPA 2016.

\end{document}